\newlength\myindent
\newcommand{\suchthat}{\;\ifnum\currentgrouptype=16 \middle\fi|\;}
\def\Cox{\operatorname{Cox}}
\def\mov{\operatorname{Mov}}
\def\Mov{\mov}
\def\aa{\ensuremath{\mathfrak{a}}}
\def\KK{\mathbb{K}}
\def\ZZ{\mathbb{Z}}
\def\QQ{\mathbb{Q}}
\def\QQQ{\QQ_{\geq 0}}
\def\KT#1{\KK[T_1,\ldots,T_{#1}]}
\def\<{\langle}
\def\>{\rangle}
\def\cone{{\rm cone}}
\newcommand{\seite}{\preceq}
\theoremstyle{plain}
\newtheorem{theorem}{Theorem}[section]
\newtheorem{lemma}[theorem]{Lemma}
\newtheorem{proposition}[theorem]{Proposition}
\newtheorem{corollary}[theorem]{Corollary}
\theoremstyle{definition}
\newtheorem{definition}[theorem]{Definition}
\newtheorem{example}[theorem]{Example}
\newtheorem{algo}[theorem]{Algorithm}
\newtheorem{remark}[theorem]{Remark}
\newtheorem{construction}[theorem]{Construction}
\subjclass[2010]{
Primary 14L24; Secondary 13A50, 14Q99, 13P10, 68W10.
}
\keywords{Geometric invariant theory, group action, GIT-fan, parallel computation, Mori dream spaces}
\begin{document}
\title[Computing GIT-fans with symmetry]{Computing GIT-fans with symmetry and \\the Mori chamber decomposition of $\overline{M}_{0,6}$}
\author[J.~B\"ohm]{Janko~B\"ohm}
\address{Department of Mathematics\\
University of Kaiserslautern\\
Erwin-Schr\"odinger-Str.\\
67663 Kaiserslautern\\
Germany}
\email{boehm@mathematik.uni-kl.de}
\author[S.~Keicher]{Simon~Keicher}
\address{
Departamento de Matematica\\
Facultad de Ciencias Fisicas y Matematicas\\
Universidad de Concepci{\'o}n\\ Casilla 160-C, Concepci{\'o}n, Chile
}
\email{keicher@mail.mathematik.uni-tuebingen.de}
\author[Y.~Ren]{Yue~Ren}
   \address{Ben-Gurion University of the Negev\\
     Department of Mathematics\\
     P.O.B. 653\\
     8410501 Be'er Sheva, Israel
     }
   \email{reny@post.bgu.ac.il}
\thanks{The authors acknowledge support of the DFG SPP 1489. The second author was supported by proyecto FONDECYT postdoctorado N.~3160016.}

\begin{abstract}
We propose an algorithm to compute the GIT-fan for torus actions on affine varieties with symmetries. The algorithm
combines computational techniques from commutative algebra, convex geometry and group
theory.
We have implemented our algorithm in the \textsc{Singular} library
\textsc{gitfan.lib}. Using our implementation, we compute the Mori chamber
decomposition of~$\operatorname{Mov}(\overline{M}_{0,6})$.

\end{abstract}
\maketitle

\section{Introduction}

Dolgachev/Hu~\cite{DolgachevHu} and Thaddeus~\cite{Tha} assigned to an
algebraic variety with the action of an algebraic group the \emph{GIT-fan\/},
a polyhedral fan enumerating the GIT-quotients in the sense of
Mumford~\cite{Mumford}. The case of the action of an algebraic torus $H$ on an
affine variety $X$ has been treated by Berchtold/Hausen~\cite{BeHa}. Based on
their construction, an algorithm to compute the GIT-fan in this setting has
been proposed in~\cite{Ke}. Note that this setting is essential for many applications, since the torus case can be used to
investigate the GIT-variation of the action of a connected reductive group $G$, see~\cite{AH}.

In many important examples, $X$ is symmetric under the action of a finite group which either is known directly from its geometry or can be computed, e.g., using~\cite{HaKeWo}. A prominent instance is the Deligne-Mumford compactification
$\overline{M}_{0,6}$ of the moduli space of $6$-pointed stable curves of genus
zero, which has a natural action of the symmetric group~$S_{6}$.
In this paper, we  address two main problems:
\begin{itemize}[leftmargin=*]
 \item to develop an efficient algorithm computing GIT-fans, which makes use of symmetries, and
 \item to determine the Mori chamber decomposition of the cone of movable divisor classes of~$\overline M_{0,6}$.
\end{itemize}

We first describe an algorithm that determines the
GIT-fan by computing exactly one representative in each orbit of maximal cones. Each cone is represented by a single integer.  
The algorithm
relies on Gr\"{o}bner basis techniques, convex geometry and
actions of finite symmetry groups. It demonstrates the strength of cross-boarder methods in computer algebra, and the efficiency of the algorithms implemented in all involved areas. The algorithm is also suitable for parallel computations.
We provide an implementation in the
library \textsc{gitfan.lib}~\cite{gitfanlib} for the 
computer
algebra system \textsc{Singular}\footnote{The library is available in
the current development version and will be part of the next release.}~\cite{singular}. The implementation is an interesting use case for the current efforts to connect different Open Source computer algebra systems, see~\cite[Sec. 2.4]{BDSR}.

We then turn to $\overline{M}_{0,6}$, which is known to be a
\emph{Mori dream space\/}, that is, its Cox ring $\operatorname{Cox}(\overline
{M}_{0,6})$ is finitely generated, see~\cite{HuKe}.
Castravet~\cite{Ca} has
determined generators for $\operatorname{Cox}(\overline{M}_{0,6})$ and Bernal
Guill\'{e}n~\cite{Ber} the relations as well as an explicit description of the symmetry group action.
An interesting
open problem is the computation of the \emph{Mori chamber
decomposition\/} of the cone of movable divisor classes $\operatorname{Mov}%
(\overline{M}_{0,6})\subseteq\operatorname{Eff}(\overline{M}_{0,6})$, 
see~\cite{HaTs} for a description of these cones in terms of generators. This fan
is the decomposition of $\operatorname{Mov}(\overline{M}_{0,6})$ into chambers
of the GIT-fan of the action of the characteristic torus on its total
coordinate space; it characterizes the birational geometry of~$\overline
{M}_{0,6}$. In Section~\ref{M06}, we solve the mentioned problem
and obtain the following result.

\begin{theorem}
\label{thm:M06gitfan}
The Mori chamber decomposition of $\operatorname{Mov}(\overline{M}_{0,6})$ is a (pure)
$16$-dimensional fan with $176\,512\,180$ maximal cones and $296\,387$ rays.
The
set of maximal cones decomposes into $249\,604$ orbits of $S_{6}$, the set of rays into $9\,218$ orbits. For the maximal cones, the number of
orbits of a given cardinality is as follows:%
\[
{\footnotesize
\begin{tabular}
[c]{l|lllllllllllllll}%
\text{cardinality} & $1$ & $6$ & $10$ & $15$ & $20$ & $30$ & $45$ & $60$ &
$72$ & $90$ & $120$ & $180$ & $240$ & $360$ & $720$\\\hline
\text{no.~of orbits} & $1$ & $1$ & $1$ & $4$ & $1$ & $1$ & $9$ & $27$ & $4$ &
$46$ & $32$ & $488$ & $4$ & $7934$ & $241051$%
\end{tabular}
}
\]

\end{theorem}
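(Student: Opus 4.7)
The plan is to turn the theorem into a large but finite GIT-fan computation to which the algorithm developed in the earlier sections of the paper applies, and to exploit the natural $S_6$-action throughout. The starting data is the presentation of the Cox ring: I would take Castravet's $40$ generators together with Bernal Guill\'en's explicit list of relations, so that the total coordinate space $X = \operatorname{Spec}(\Cox(\overline{M}_{0,6}))$ is realized as a closed subvariety of an affine space $\KK^{40}$ cut out by an explicit ideal $I$. The grading by $\operatorname{Cl}(\overline{M}_{0,6})\cong \ZZ^{16}$ defines the action of the characteristic torus $H = \operatorname{Spec}(\KK[\operatorname{Cl}])$ on $X$, and the $S_6$-action on $X$ is recorded in Bernal Guill\'en's description; both actions commute and fit into the framework of the paper.

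Next, the Mori chamber decomposition of $\operatorname{Mov}(\overline{M}_{0,6})$ coincides with the restriction of the GIT-fan of $H\curvearrowright X$ to the movable cone. The relevant cone is given by Hassett/Tschinkel's generators. The plan is then to run the main algorithm of the paper, using the $S_6$-symmetry to compute only one representative of each orbit of maximal GIT-chambers and of rays. The key subroutines are: (a) computing GIT-cones as convex hulls of images of faces of the orthant under the $\operatorname{Cl}$-grading matrix, with a membership test for a given weight vector realized via Gr\"obner bases on $I$; (b) the ``hashing'' of each maximal cone to a single integer so that $S_6$-orbits can be recognized symbolically rather than geometrically; (c) the traversal of the chamber structure across codimension-one walls, parallelized over orbit representatives.

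Once the orbit representatives of maximal cones have been produced, the remaining bookkeeping is combinatorial: compute the stabilizer of each representative in $S_6$ to determine the orbit cardinality $720/|\mathrm{Stab}|$, sum the cardinalities to recover the total $176\,512\,180$, and classify the $249\,604$ orbits by their cardinality to produce the table. The rays are obtained by taking the union of the edges of the representatives and reducing again modulo $S_6$, yielding the claimed $296\,387$ rays in $9\,218$ orbits. The dimension statement and purity follow because $X$ has trivial $H$-stabilizers in general position, so all GIT-chambers meeting the interior of $\operatorname{Mov}$ have dimension equal to $\operatorname{rk}\operatorname{Cl}(\overline{M}_{0,6}) = 16$.

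The main obstacle is of course the sheer size of the computation: without symmetry one would face close to $1.8\cdot 10^8$ maximal cones, each requiring a Gr\"obner basis verification on a nontrivial ideal in $40$ variables. The whole point of the algorithm is to reduce this to roughly $2.5\cdot 10^5$ orbit representatives and then distribute them across many cores; I expect that the non-trivial engineering bottlenecks are the memory footprint of storing traversed cones, the parallel load-balancing between chambers of very different complexity, and the cost of the Gr\"obner basis tests, which is precisely where the implementation in \textsc{gitfan.lib} described above is essential.
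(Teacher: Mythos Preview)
Your proposal is correct and follows essentially the same computational strategy as the paper: feed Bernal's presentation of $\Cox(\overline{M}_{0,6})$, the degree matrix $Q$, and the explicit $S_6$-action into Algorithm~\ref{algo:gitfanWithSymmetry}, then read off the orbit statistics from the output. The only refinement the paper makes explicit and you leave implicit is how the traversal is confined to $\operatorname{Mov}(\overline{M}_{0,6})$---one redeclares a facet to be \emph{interior} precisely when it meets $\operatorname{Mov}(\overline{M}_{0,6})^\circ$, so the wall-crossing never leaves the moving cone; note also that the GIT-cones arise as \emph{intersections} of orbit cones (the $Q$-images of $\aa$-faces), not as convex hulls, so your description of step~(a) is slightly garbled even if the intended algorithm is the right one.
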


The complete data of the fan including vectors in the relative interior of
each maximal cone is available at~\cite{gitfandata}.

This problem is computationally challenging both due to the complexity of the
input, the resulting fan and the intermediate data to be handled in the course of the
computation. Hence, aside from the theoretical importance, it is a meaningful
benchmark for the symmetric GIT-fan algorithm.

This paper is structured as follows.
In Section~\ref{sec computing GIT-fans},
we introduce our notation and recall the algorithm of \cite{Ke} for
computing GIT-fans; this will be our starting-point for developing an
algorithm computing GIT-fans with symmetries. In Section~\ref{sec monomial containment}, we present an efficient test for monomial
containment. The test is a key ingredient to the GIT-fan algorithm, but is also relevant in a broader sense, for example, for computing tropical varieties.  We give timings, which illustrate that our method is
outperforming the known methods by far.
In Section~\ref{sec:symmetricGITfan},
we describe the symmetric GIT-fan algorithm as well as implementation
details.
It is followed by two explicit example computations in
Section~\ref{examples}.
Finally, in Section~\ref{M06}, we apply this algorithm to
compute the Mori chamber decomposition of the moving cone of~$\overline
{M}_{0,6}$.

{\em Acknowledgements.} We would like to thank J\"urgen Hausen for turning our  interest to the subject. We also thank Hans Sch\"onemann for helpful discussions, and the \textsc{Singular}-group of the University of Kaiserslautern for providing resources for the computation. We further thank Antonio Laface and Diane Maclagan for helpful and interesting discussions.

\section{Computing GIT-Fans}

\label{sec computing GIT-fans}

In this section, we recall from~\cite{Ke, ArDeHaLa, BeHa} the setting and an algorithm to compute GIT-fans. Moreover, we fix our notation. This section serves as a starting point for our advanced algorithm described in the subsequent sections.

We work in the following setting. Let $\mathbb{K}$ be an algebraically closed
field of characteristic zero. Consider an affine variety $X\subseteq
\mathbb{K}^{r}$ over $\mathbb{K}$, acted on effectively by an algebraic torus
$H:=(\mathbb{K}^{*})^{k}$ where $k\in\mathbb{Z}_{\geq1}$. We assume that $X$
is given as a zero set $X=V(\mathfrak{a})\subseteq\mathbb{K}^{r}$ of a
monomial-free ideal $\mathfrak{a}\subseteq\mathbb{K}[T_{1},\ldots,T_{r}]$.
Note that the $H$-action on $X$ can be encoded in an integral matrix
$Q\in\mathbb{Z}^{k\times r}$ of full rank. Denoting the columns of $Q$ by
$q_{1},\ldots,q_{r}$, the ideal $\mathfrak{a}\subseteq\mathbb{K}[T_{1}%
,\ldots,T_{r}]$ is homogeneous with respect to the $\mathbb{Z}^{k}$-grading
\[
\deg(T_{1})\,=\,q_{1},\,\quad\ldots,\quad\deg
(T_{r})\,=\,q_{r}.
\]
The \emph{GIT-fan\/} of the $H$-action on $X$ is a pure, $k$-dimensional
polyhedral fan $\Lambda(\mathfrak{a},Q)$ in $\mathbb{Q}^{k}$ with support
$\mathrm{cone}(q_1,\ldots,q_r)$. The cones of the GIT-fan $\Lambda(\mathfrak{a} ,Q)$ are
called \emph{GIT-cones\/}. They enumerate the sets of semistable points
$X^{\mathrm{ss}}(w)\subseteq X$ that admit a good quotient by $H$ with
quasi-projective quotient space $X^{\mathrm{ss}}(w){/\!\!/} H$ and that
satisfy a certain maximality condition, see~\cite[Section~1.4]{ArDeHaLa}
and~\cite{BeHa} for details.

The GIT-fan can be computed by Algorithm \ref{algo:gitfan} from~\cite{Ke}. To describe this approach, we
use the following notation. Given an $r$-tuple $z=(z_{1},\ldots,z_{r})$ and a
face $\gamma_{0}\preceq\gamma$ of the positive orthant $\gamma:= \mathbb{Q}^{r}_{\geq0}$, define the
restriction $z_{\gamma_{0}}$ via
\[
(z_{\gamma_{0}})_{i}\ :=\
\begin{cases}
z_{i}, & e_{i}\in\gamma_{0},\\
0, & e_{i}\notin\gamma_{0},
\end{cases}
\qquad1\leq i\leq r.
\]
If the ideal $\mathfrak{a}$ is generated by $g_{1},\ldots,g_{s} \in \KT{r}$ we write
$\mathfrak{a} _{\gamma_{0}}\subseteq\mathbb{K}[T_{\gamma_{0}}]$ for the ideal
generated by $g_{1}(T_{\gamma_{0}}),\ldots,g_{s}(T_{\gamma_{0}})$, where
$T=(T_{1},\ldots,T_{r})$. We call a face $\gamma_{0}\preceq\gamma$ an
\emph{$\mathfrak{a}$-face} if the corresponding torus orbit meets the variety,
that is,
$$
X\cap\mathbb{T}_{\gamma_{0}}\,\ne\,\emptyset\qquad\text{where}\qquad \mathbb{T}_{\gamma
_{0}}\,:=\,(\mathbb{K}^{*})^{r}\cdot(1,\ldots,1)_{\gamma_{0}}.$$
Projecting an
$\mathfrak{a} $-face $\gamma_{0}\preceq\gamma$ to $\mathbb{Q}^{k}$ via $Q$ yields the \emph{orbit cone}
$Q(\gamma_{0})\subseteq\mathbb{Q}^{k}$. Writing $\Omega$ for the (finite) set
of all orbit cones, the \emph{GIT-cones} are the polyhedral cones
\[
\lambda_{\Omega}(w) \ :=\ \bigcap_{\substack{\vartheta\in\Omega\\w\in
\vartheta^{\circ}}} \vartheta\ \subseteq\ \mathbb{Q}^{k} \qquad\text{where}%
\ w\in Q(\gamma).
\]

In the following, by an \emph{interior facet} of a full-dimensional cone
$\lambda\subseteq Q(\gamma)$, we mean a facet $\eta\preceq\lambda$ such that
$\eta$ meets the relative interior $Q(\gamma)^{\circ}$ non-trivially.
Moreover, we denote by $\ominus$ the symmetric difference in the first component, that is, given two subsets
$A,B\subseteq M\times N$ of sets $M$ and~$N$ we set
\[
A\ominus B\,:=\, \{(\eta,\lambda)\in A\cup B \mid\eta\in\pi_M(A) \text{ xor }
\eta\in\pi_M(B)\},
\]
where $\pi_M\colon M\times N\rightarrow M$ is the projection onto the first component. We are now ready to state the algorithm to compute the GIT-fan~$\Lambda(\aa,Q)$.

\begin{algo}[Compute the GIT-fan]
\label{algo:gitfan}\
\begin{algorithmic}[1]
\REQUIRE{An ideal $\aa\subseteq \KK[T_1,\ldots,T_{r}]$
and a matrix $Q\in\mathbb{Z}^{k\times r}$ of full rank  such that $\aa$ is homogeneous with respect to the multigrading given by $Q$.}
\ENSURE{The set of maximal cones of $\Lambda(\aa, Q)$.}
\STATE $\mathcal A := \{\hspace{1mm}\}$
\FORALL {faces $\gamma_0\seite \QQQ^r$}
\label{afacesloop1}
\IF{
$\gamma_0$ is an $\aa$-face as verified by Algorithm~\ref{algo:isaface}
}
\STATE $\mathcal A := \mathcal A \cup \{\gamma_0\}$
\ENDIF
\ENDFOR
\STATE $\Omega:=\left\{Q (\gamma_0) \mid \gamma_0 \in  \mathcal A\right\}$
\STATE Choose a vector $w_0\in Q(\gamma)^\circ$ such that $\dim(\lambda_\Omega(w_0))=k$.
\STATE Initialize $\mathcal C := \{\lambda(w_0)\}$ and $\mathcal{F}:=\{(\tau, \lambda_\Omega(w_0))\mid \tau\seite\lambda(w_0) \text{ interior facet}\}$.
\WHILE {there is $(\eta,\lambda)\in\mathcal{F}$}
\label{fan trav}
\STATE Find $w \in Q(\gamma)^\circ$ such that $\lambda_\Omega(w) \cap \lambda=\eta$\label{line:neighbour}.
\STATE $\mathcal C := \mathcal C \cup \{\lambda_\Omega(w)\}$
\STATE $\mathcal F := \mathcal F \ominus \{(\tau,\lambda_\Omega(w))\mid\tau\seite\lambda_\Omega(w) \text{ interior facet}\}$
\ENDWHILE
\RETURN $\mathcal C$
\end{algorithmic}
\end{algo}

\begin{algo}[$\aa$-face test]
\label{algo:isaface}\
\begin{algorithmic}[1]
\REQUIRE{Generators $g_1,\ldots,g_s$ for an ideal $\aa\subseteq \KK[T_1,\ldots,T_r]$, and a face $\gamma_0\seite \gamma$.}
\ENSURE{\texttt{true} if $\gamma_0$ is an $\aa$-face, \texttt{false} else.}
\RETURN {$1\notin\aa_{\gamma_0}:(\prod_{e_i\in\gamma_0}T_i)^\infty$}
\end{algorithmic}
\end{algo}

Algorithm \ref{algo:gitfan} will be our starting-point for developing an
efficient method for computing GIT-fans with symmetry in
Section~\ref{sec:symmetricGITfan}.
Algorithm \ref{algo:isaface} is an ad-hoc algorithm for determining $\aa$-faces.
How to improve its performance will be discussed in the next section.

\begin{remark}\
\begin{enumerate}[leftmargin=*]
\item Note that in Algorithm~\ref{algo:isaface}, instead of computing the
saturation, one can also perform the radical membership test $\prod_{e_i\in\gamma_0}T_i\in \sqrt{\aa_{\gamma_0}}$. Both approaches require
Gr\"{o}bner basis computations.
\item In Line~\ref{line:neighbour} of Algorithm~\ref{algo:gitfan}, we find $w$
by adding an appropriate small positive multiple of an outer normal of $\lambda$ at $\eta$
to a vector in the relative interior~$\eta^{\circ}$.
\end{enumerate}
\end{remark}

\section{Closure computation\label{sec monomial containment}}

The first bottle-neck in Algorithm \ref{algo:gitfan} is the computation of the
$\mathfrak{a}$-faces using Algorithm~\ref{algo:isaface}. In this section,
we present a fast algorithm for the saturation of an ideal at a union of
coordinate hyperplanes. Geometrically, this process corresponds to computing
the closure $\overline{X}\subseteq\mathbb{K}^{n}$ of a given subvariety
$X\subseteq(\mathbb{K}^{*})^{n}$. In particular, this algorithm gives an
efficient monomial containment test, which is superior to the standard approaches using the
Rabinowitsch trick or saturation.
We first present the algorithm and then illustrate its efficiency by providing a series of timings.

In this section, we have no assumptions on the field $\mathbb{K}$.
Consider an ideal
$I\subseteq R:=\mathbb{K}[Y_{1},\ldots,Y_{n}]$. We describe an algorithm for
computing $I:\left(  Y_{1}\cdots Y_{m}\right)  ^{\infty}$, where $m\leq n$.
A~key ingredient is the following generalization of~\cite[Lemma~12.1]%
{Stu}. Denote by $\operatorname{LM}_{>}(f)$ the leading monomial of a
polynomial $f\in R$ with respect to a monomial ordering~$>$.

\begin{proposition}\label{prop saturierung}
Let $>$ be a monomial ordering on $R$ and $\mathcal{G}$ a Gr\"{o}bner basis of $I$.
Suppose that for all $f\in\mathcal{G}$ we have
\[ Y_{m}\mid f\quad\Longleftrightarrow\quad Y_{m}\mid\operatorname{LM}_{>}(f). \]
Then
\[
\left\{  f\in\mathcal{G}\mid Y_{m}\text{ does not divide }f\right\}
\cup\left\{  \frac{f}{Y_{m}}\;\ifnum\currentgrouptype=16 \middle\fi|\;
f\in\mathcal{G}\text{, }Y_{m}\text{ divides }f\right\}  
\]
is a Gr\"{o}bner basis of the ideal quotient $I:Y_{m}$, and
\[
\left\{  \frac{f}{Y_{m}^{i}}\;\ifnum\currentgrouptype=16 \middle\fi|\;
f\in\mathcal{G}\text{ and }i\geq0\text{ maximal such that }Y_{m}^{i}\mid
f\right\}\]
is a Gr\"{o}bner basis for the saturated ideal $I:Y_{m}^{\infty}$.
\end{proposition}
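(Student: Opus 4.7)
The plan is to prove the two claims in sequence, treating the hypothesis as a cancellation device for converting divisibilities that carry a factor of $Y_m$ into divisibilities after dividing $Y_m$ out of the generators.

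For the first claim, denote the proposed Gröbner basis of $I:Y_m$ by $\mathcal{G}_1$. The inclusion $\mathcal{G}_1\subseteq I:Y_m$ is immediate from $Y_m\cdot f\in I$ and $Y_m\cdot(f/Y_m)=f\in I$. For the reverse inclusion and the Gröbner basis property simultaneously, I would show that any $g\in I:Y_m$ satisfies $\operatorname{LM}_{>}(h)\mid\operatorname{LM}_{>}(g)$ for some $h\in\mathcal{G}_1$. Since $Y_m g\in I$, the monomial $Y_m\operatorname{LM}_{>}(g)=\operatorname{LM}_{>}(Y_m g)$ is divisible by $\operatorname{LM}_{>}(f)$ for some $f\in\mathcal{G}$, and the hypothesis yields a clean dichotomy: if $Y_m\nmid\operatorname{LM}_{>}(f)$ then $Y_m\nmid f$, so $f\in\mathcal{G}_1$ and cancelling $Y_m$ yields $\operatorname{LM}_{>}(f)\mid\operatorname{LM}_{>}(g)$; if $Y_m\mid\operatorname{LM}_{>}(f)$ then $Y_m\mid f$, so $f/Y_m\in\mathcal{G}_1$ with leading monomial $\operatorname{LM}_{>}(f)/Y_m$ dividing $\operatorname{LM}_{>}(g)$ directly. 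A standard iterated-reduction argument converts this into both the generation statement and the Gröbner basis property.

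For the second claim, my approach is to iterate the first claim: starting from $\mathcal{G}^{(0)}:=\mathcal{G}$, form $\mathcal{G}^{(k+1)}$ from $\mathcal{G}^{(k)}$ by the same recipe, and observe that after finitely many steps the elements stabilize at $\{f/Y_m^{i_f}\}_{f\in\mathcal{G}}$ while the chain $I\subseteq I:Y_m\subseteq I:Y_m^2\subseteq\cdots$ stabilizes at $I:Y_m^\infty$. The main obstacle is that the hypothesis $Y_m\mid h\Leftrightarrow Y_m\mid\operatorname{LM}_{>}(h)$ is not automatically preserved by a single iteration: for $h=f/Y_m$ with $Y_m\mid f$, the two conditions $Y_m\mid h$ and $Y_m\mid\operatorname{LM}_{>}(h)$ translate respectively to $Y_m^2\mid f$ and $Y_m^2\mid\operatorname{LM}_{>}(f)$, which are not equivalent under the original single-power assumption. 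To push the iteration through, I would re-prove the divisibility lemma directly for $I:Y_m^\infty$: given $g\in I:Y_m^\infty$, pick $N$ minimal with $Y_m^N g\in I$ and analyze the divisibility $\operatorname{LM}_{>}(f)\mid Y_m^N\operatorname{LM}_{>}(g)$ by writing $\operatorname{LM}_{>}(f)=Y_m^{\mu_f}m_f$ and $\operatorname{LM}_{>}(g)=Y_m^{v_g}q_g$ with $m_f,q_g$ coprime to $Y_m$; the bound $\mu_f-i_f\leq v_g$ needed for $\operatorname{LM}_{>}(f/Y_m^{i_f})\mid\operatorname{LM}_{>}(g)$ is then forced by the minimality of $N$, via a single reduction step of $Y_m^N g$ that would otherwise produce an element of $I$ of strictly smaller $Y_m$-degree than $N$, possibly after replacing $f$ by a different generator with the same $x_1$-free leading monomial part.
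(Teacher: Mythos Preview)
Your treatment of the first claim is correct and is the standard argument; the paper itself gives no details beyond a citation to Sturmfels.

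For the second claim, the minimality-of-$N$ step does not go through, and in fact cannot: the obstacle you isolated is genuine, and under the hypothesis as literally stated the saturation assertion is false. Take $R=\KK[X,Y]$ with the lexicographic order $Y>X$ and $\mathcal{G}=\{Y^2+XY,\;X^2Y\}$, which is a Gr\"obner basis of $I=\langle\mathcal{G}\rangle$. Both generators and both leading monomials are divisible by $Y$, so the hypothesis holds. The set proposed for $I:Y^\infty$ is $\{Y+X,\,X^2\}$, yet $Y^3=(Y-X)(Y^2+XY)+X^2Y\in I$, so $I:Y^\infty=R$. In your notation, for $g=1$ one has $N=3$; the only usable generator is $f=Y^2+XY$ with $\mu_f=2$, $i_f=1$, $v_g=0$, and the bound you need reads $1\le 0$. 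The single reduction step $Y^3\mapsto -XY^2=-X(Y^2+XY)+X^2Y\in I$ does not return an element of the form $Y^{N'}g$, so minimality of $N$ gives no leverage, and there is no alternative generator with the same $Y$-free leading part to switch to.

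What makes the argument work in the intended applications (and what the paper is tacitly importing via the citation) is the stronger property, enjoyed by the orderings of Remark~\ref{rem:monomialOrderings}, that $Y_m\mid h\Longleftrightarrow Y_m\mid\operatorname{LM}_>(h)$ holds for \emph{every} polynomial $h$; equivalently, $Y_m^{\,i}\mid f\Longleftrightarrow Y_m^{\,i}\mid\operatorname{LM}_>(f)$ for all $i\ge 1$ and all $f\in\mathcal{G}$. Under that hypothesis each $\mathcal{G}^{(k)}$ inherits the condition and your iteration of the first claim goes through verbatim, with stabilisation once $k\ge\max_{f\in\mathcal{G}} i_f$. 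So your instinct was right: the proof as sketched in the paper needs this stronger form of the hypothesis, and your direct workaround cannot succeed without it.
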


\begin{proof}
Immediate generalization of the proof of~\cite[Lemma~12.1]{Stu}.
\end{proof}

\begin{remark}
\label{rem:monomialOrderings}
Consider the setting of Proposition~\ref{prop saturierung}.
\begin{enumerate}[leftmargin=*]
\item If $I$ is weighted homogeneous with respect to the weight
vector $w\in\mathbb{Q}^{n}$ with $w_{i}>0$ for all $i$, then we can use a
$w$-weighted degree ordering $>_{w}$ with a negative reverse lexicographical
tie-breaker ordering%
\[
\qquad
Y^{\alpha}>_{\text{rs}}Y^{\beta}\;\;:\Longleftrightarrow\;\;
\alpha_{n}=\beta_{n},\;\ldots,\;\alpha_{i+1}=\beta_{i+1} \text{ and }\alpha_{i}<\beta_{i} \text{ for some } n\geq i\geq 1.\]
\item In particular, if $\mathcal{G}$ is homogeneous with respect to the
standard grading, then we can use the graded reverse lexicographic term
ordering, see \cite[Lemma 12.1]{Stu}.

\item Proposition~\ref{prop saturierung} is also correct in the setting of local orderings
and standard bases. In this case, the assumption of the proposition is always
satisfied for the negative reverse lexicographical ordering.
\end{enumerate}
\end{remark}

The following algorithm computes the saturation of a weighted homogeneous ideal at the product of the first $m$ variables using Proposition~\ref{prop saturierung} and a modified Buchberger's algorithm. The modification lowers the degrees of the computed Gr\"obner basis elements, thereby leading to an earlier stabilization of intermediate leading ideals and, hence, earlier termination of the algorithm.

\begin{algo}[Saturation at a product of variables]
\label{algo:sat}\
\begin{algorithmic}[1]
\REQUIRE A set of $w$-homogeneous generators $\mathcal{G}\subseteq I$ of an ideal $I\subseteq R=\mathbb{K}[Y_{1},\ldots,Y_{n}]$ for some weight vector $w\in\ZZ^n_{>0}$, an integer $m\leq n$.\
\ENSURE A Gr\"obner basis for the saturation $I:(Y_1\cdots Y_m)^\infty$ with respect to the $w$-weighted negative reverse lexicographical ordering as in Remark \ref{rem:monomialOrderings}.
\FOR{$i=1,\ldots,m$}
\STATE Let $>_w$ be the $w$-weighted degree ordering with the negative reverse lexicographical tie-breaker $>_{\text{rs}}$ such that
\[ Y_1 >_{\text{rs}} \ldots >_{\text{rs}} Y_{i-1} >_{\text{rs}} Y_{i+1} >_{\text{rs}} \ldots >_{\text{rs}} Y_n >_{\text{rs}} Y_i.\vspace{-0mm} \]
\emph{Apply Buchberger's algorithm to $\mathcal{G}$ with the following modification:}
\REPEAT
\STATE $\mathcal{H}:=\mathcal{G}$
\FORALL {$f,g\in \mathcal{H}$}
\STATE $r:=\operatorname*{NF}_{>_w}(\operatorname*{spoly}_{>_w}(f,g),\mathcal{H})$
\IF {$r\neq0$}
\STATE $r:=r/(Y_1^{\alpha_1}\cdots Y_m^{\alpha_m})$, where $\alpha_j$ is maximal such that $Y_j^{\alpha_j} \mid r$.
\STATE $\mathcal{G}:=\mathcal{G}\cup\{r\}$\label{line add element}
\ENDIF
\ENDFOR
\UNTIL{$\mathcal{G}=\mathcal{H}$}
\ENDFOR
\RETURN{$\mathcal{G}$}
\end{algorithmic}
\end{algo}

\begin{proof}

Termination follows by the Noetherian property since in Line
\ref{line add element} the lead ideal of $\left\langle \mathcal{G}
\right\rangle $ strictly increases.

Denote by $\mathcal{G}_{i}$ the Gr\"obner basis after step~$i$ and by $I_{i}$ the
ideal generated by it. Because none of the elements of $\mathcal{G}_{i}$ is divisible by $Y_{i}$
and due to the choice of the monomial ordering, Proposition~\ref{prop saturierung} implies that
$I_{i}$ is saturated with respect to $Y_{i}$. Therefore, we have%

\[
\underbrace{I:Y_{1}^{\infty}}_{\subseteq I_{2}}:Y_{2}^{\infty}:\ldots
:Y_{m}^{\infty}\subseteq\underbrace{I_{2}:Y_{2}^{\infty}}_{\subseteq I_{3}%
}:\ldots:Y_{m}^{\infty}\subseteq\ldots\subseteq\underbrace{I_{m-1}%
:Y_{m}^{\infty}}_{\subseteq I_{m}} \subseteq I_{m}.
\]
The claim follows from the fact that for all $1\leq i\leq m$ we have
\[
I:Y_{1}^{\infty}:\ldots:Y_{i}^{\infty}\,\subseteq\, I_i=\langle\mathcal{G}_{i}
\rangle\,\subseteq\, I:(Y_{1}\cdots Y_{m})^{\infty}.\qedhere
\]
\end{proof}

With regard to timings, we compare Algorithm \ref{algo:sat} as implemented in
the \textsc{Singular} library \textsc{gitfan.lib} with other standard methods
for computing saturations. Here we consider the ad-hoc algorithm given by
Proposition~\ref{prop saturierung}, the computation of saturations by iterated ideal
quotients (SAT). We also give timings
for the use of the trick of Rabinowitsch to determine monomial containment
(RA). All algorithms are implemented in \textsc{Singular}. To improve the performance, the implementations of Algorithm \ref{algo:sat} and Proposition~\ref{prop saturierung} use a parallel computation strategy to heuristically determine an ordering of the variables for the iterated saturation. All other algorithms are implemented in a sequential way. The timings are in seconds on an AMD Opteron 6174 
machine with 48 cores,
$2.2$~GHz, and $128$~GB of RAM.

As an example, we consider the ideal $\aa\subseteq R=\mathbb{Q}[y_{1234},\ldots,z_{156}]$  obtained from Algorithm~\ref{algo:coxm06}. It has $225$ generators in $40$ variables.
Timings for the ideal $\aa_J:=\aa_{\mathrm{cone}(e_{j}\mid
j\in J)}$, as defined in Section \ref{sec computing GIT-fans},
 are given in Table \ref{tab timings}. In the cases marked by a star, the
computation did not finish within one day.

 \begin{table}[H]
\begin{center}%

\begin{tabular}
[c]{c|c|c|c|c|c|c}%
$\{1,\ldots,40 \}\backslash J$ & $40-|J|$ & $\aa$-face & Alg. \ref{algo:sat} & Prop. \ref{prop saturierung} & SAT & RA
\ \\\hline
$\{3,4,5,7,\ldots,15\}$&28& no & 1     & 761 & 44 & 70 \\
$\{9,11,12,13,15\}$&$35$& no   & 1     & 57200 & 13400 & 40300 \\
$\{11,12,13,15\}$&$36$& no     & 1     & 44100 & 9140  & 38500  \\
$\{9,11,14,15\}$&$36$& yes     & 48    & $\ast$ & $\ast$  & $\ast$ \\
$\{9,11,15\}$&$37$& yes        & 920  & $\ast$ & $\ast$ & $\ast$  \\
$\{9,11,13\}$&$37$& no         & 1     & 31400 & 7610 & 24300
\end{tabular}

\end{center}
 \medskip
 \caption{Timings for computing the closure.}%
 \label{tab timings}
 \end{table}

\section{Computing GIT-Fans with Symmetry\label{sec:symmetricGITfan}}

As in Section~\ref{sec computing GIT-fans}, we consider an ideal $\aa\subseteq \KT{r}$ that is homogeneous with respect to the $\ZZ^k$-grading on $\KT{r}$
given by assigning to $T_i$  the $i$-th column of an integral $(k\times r)$-matrix $Q$ as its degree;
this encodes the action of $H=(\KK^*)^k$ on $X=V(\aa)\subseteq \KK^r$.
In this section,
we provide an efficient algorithm to compute the GIT-fan
$\Lambda(\aa,Q)$ if symmetries of the input are known.
By symmetries, we mean the following.

\begin{definition}
\label{def:symmgrp}
A \emph{symmetry
group} of the action of $H$ on $X$ is a subgroup $G$ of the symmetric group
$S_{r}$
such that there are group actions%
\[%
\begin{array}
[c]{rclcllcl}%
G & \times & \mathbb{K}[T_{1},\ldots,T_{r}] & \rightarrow & \mathbb{K}%
[T_{1},\ldots,T_{r}], & (\sigma,T_{j}) & \mapsto & \sigma(T_{j}%
)\ :=\ c_{\sigma,j}\cdot T_{\sigma(j)}\\
G & \times & \mathbb{Q}^{r} & \rightarrow & \mathbb{Q}^{r}, & (\sigma,e_{j}) &
\mapsto & \sigma(e_{j})\ :=\ e_{\sigma(j)}\\
G & \times & \mathbb{Q}^{k} & \rightarrow & \mathbb{Q}^{k}, & (\sigma,v) &
\mapsto & A_{\sigma}\cdot v
\end{array}
\]
with $A_{\sigma}\in\mathrm{\operatorname{GL}}(k,\mathbb{Q})$ and $c_{\sigma
}\in(\KK^*)^{r}$ such that $G\cdot\mathfrak{a}=\mathfrak{a}$ holds
and for
each $\sigma\in G$ the following diagram is commutative:
\[
\xymatrix{
\QQ^r\ar[d]_Q \ar[rr]^{e_i\,\mapsto\, e_{\sigma(j)}}
&&
\QQ^r \ar[d]^Q
\\
\QQ^k \ar[rr]^{A_\sigma} && \QQ^k
}
\]
Note that the existence of such a linear map $A_{\sigma}$ is
equivalent to
$\sigma \ker(Q)$ being a subset of the kernel~$\ker(Q)$. Note also that for the graded components $\aa_w$, where $w\in \ZZ^k$,  we have  $\sigma\cdot \aa_w = \aa_{A_\sigma w}$ for all $\sigma \in G$.
\end{definition}

\begin{remark}
Symmetries of a homogeneous ideal as in Definition~\ref{def:symmgrp} can be computed with the methods of~\cite{HaKeWo}.
\end{remark}

From now on, we fix a symmetry group $G$ for the $H$-action on $X\subseteq \KK^r$.
Our goal is to modify Algorithm~\ref{algo:gitfan} such that it can exploit the symmetries given by~$G$.

The first improvement to Algorithm~\ref{algo:gitfan} concerns the representation
of GIT-cones: we will encode them in a binary number,
such that the representation is compatible with the group action. This binary number, in turn, can be
interpreted as an integer. This yields a total ordering on the set of
GIT-cones. In conjunction with the easily computable representation, this
allows for an efficient test for membership of a given GIT-cone in a set of GIT-cones. Such a
representation is also called a {\em perfect hash function\/}.

\begin{construction}[Encoding GIT-cones as integers]
\label{con:hash}
Let the setting be as above, i.e., denote by $\Omega$ the set of orbit cones and by $\Lambda(\aa,Q)$ the GIT-fan.
Consider the map $h_\Omega$
and the action of $G$ on $\left\{  0,1\right\}^{\Omega}$
given by
\begin{gather*}
h_{\Omega}\colon\Lambda(\mathfrak{a},Q)\,\to\,\left\{  0,1\right\}
^{\Omega}\text{,\quad\quad}\lambda\,\mapsto\,\left[
\begingroup
\footnotesize
\begin{tabular}
[c]{l}%
$\Omega\rightarrow\left\{  0,1\right\}  $\\
$\vartheta\mapsto%
\begin{cases}
1 & \lambda\subseteq\vartheta\\
0 & \lambda\nsubseteqq\vartheta
\end{cases}
$%
\end{tabular}
\endgroup
\right]
,\\
G\times\left\{  0,1\right\}  ^{\Omega}\,\to\,\left\{  0,1\right\}
^{\Omega}\text{,\quad\quad}(g,b)\,\mapsto\,\left[
\begingroup
\footnotesize
\begin{tabular}
[c]{l}%
$\Omega\rightarrow\left\{  0,1\right\}  $\\
$\vartheta\mapsto b(g^{-1}\cdot\vartheta)$%
\end{tabular}
\endgroup
\right].
\end{gather*}
Then the map $h_{\Omega}$ is injective. Moreover, for all $g\in G$ and GIT-cones $\lambda\in\Lambda(\mathfrak{a},Q)$, we have
$g\cdot h_{\Omega}(\lambda)\ =\ h_{\Omega}(g\cdot\lambda)$.
\end{construction}

\begin{proof}
Any element of $\Lambda(\mathfrak{a},Q)$ is of the form $\lambda_{\Omega
}(w)\ $where$\ w\in Q(\gamma)$, that is, it is the intersection of all elements of
$\Omega$ that contain $w$. This implies that $h_{\Omega}$ is injective.
Compatibility with the group action follows immediately, since%
\[
g\cdot h_{\Omega}(\lambda)=\left[
\begingroup
\footnotesize
\begin{tabular}
[c]{l}%
$\Omega\rightarrow\left\{  0,1\right\}  $\\
$\vartheta\mapsto%
\begin{cases}
1 & \lambda\subseteq g^{-1}\cdot\vartheta\\
0 & \lambda\nsubseteqq g^{-1}\cdot\vartheta
\end{cases}
$%
\end{tabular}
\endgroup
\right]
\,=\,
h_{\Omega}(g\cdot\lambda)\text{.}\qedhere
\]
\end{proof}

\begin{remark}
Consider Construction~\ref{con:hash}.
\begin{enumerate}[leftmargin=*]
 \item
With respect to the practical implementation, recall that any binary number
determines a unique integer via its $2$-adic representation. We test
membership in a given set of GIT-cones by a binary search in an ordered list
of integers representing the set. To insert elements we use insertion sort.
\item
Our approach is more efficient than representing maximal cones in
terms of the sum of the rays, since, in the GIT-fan algorithm, cones are
naturally given in their representation in terms of half-spaces and hyperplanes, and computation of the
representation in terms of rays by double description is expensive. Note also, that in
our representation, the group action is given by permutation of bits, whereas
the action on the sum of rays requires a matrix multiplication.
\end{enumerate}
\end{remark}

We now state our refined, symmetric GIT-fan Algorithm~\ref{algo:gitfanWithSymmetry}.
When computing the $\aa$-faces, the algorithm considers a distinct set of representatives  of the orbits of the faces of $\gamma$ with regard
to the action of the symmetry group. For the individual tests, the efficient saturation computation as described in Algorithm~\ref{algo:sat} is applied.
For computing the GIT-cones of maximal dimension, the algorithm works with a reduced set of orbit cones.
With regard to the symmetry group action, it computes exactly one cone per orbit of GIT-cones, traversing facets only if necessary. The cones are represented via Construction~\ref{con:hash}.
In the following, we write $\Omega(k)$ for the full-dimensional orbit cones.

\pagebreak[3]
\begin{algo}[Computing symmetric GIT-fans]
\label{algo:gitfanWithSymmetry}\
\begin{algorithmic}[1]
\REQUIRE{A monomial-free  ideal $\aa\subseteq \KK[T_1,\ldots,T_{r}]$
and a matrix $Q\in\mathbb{Z}^{k\times r}$ of full rank such that $\aa$ is homogeneous with respect to the multigrading given by $Q$,
and a symmetry group $G$ of the action of $H=(\KK^*)^k$ on $X=V(\aa)$ given by $Q$.}
\ENSURE{A system of distinct representatives of the orbits of the $G$-action on $\Lambda(\aa, Q)(k)$.}
\STATE $\mathcal A := \{\hspace{1mm}\}$\label{line:step1}
\STATE $\mathcal{S}:=$ system of distinct representatives of the orbits of the $G$-action on $\operatorname{faces}(\gamma)$
\FORALL{$\gamma_0\in\mathcal{S}$}\label{afacesloop1}
\IF{$\gamma_0$ is an $\aa$-face  as verified by Algorithm~\ref{algo:isaface} using Algorithm~\ref{algo:sat}}
\STATE $\mathcal A := \mathcal A \cup \{\gamma_0\}$
\label{line:step5}
\ENDIF
\ENDFOR
\STATE $\Omega:=\bigcup_{\gamma_0\in \mathcal A} G \cdot Q(\gamma_0)$\label{line:step6}
\STATE $\Omega :=$ set of minimal elements of $\Omega(k)$\label{line:step7}
\STATE Choose $w_0\in Q(\gamma)$ such that $\dim(\lambda_\Omega(w_0))=k$.
\STATE $\mathcal C := \{\lambda_\Omega(w_0)\}$
\STATE $\mathcal H := \{h_\Omega(\lambda_\Omega(w_0))\}$
\STATE $\mathcal{F}:=\{(\eta,v)\mid\eta\seite\lambda_\Omega(w_0) \text{ interior facet, } v\in\lambda_\Omega(w_0)^\vee \text{ its inner normal vector}\}$
\WHILE {there is $(\eta,v)\in\mathcal{F}$}\label{line:step12}
\label{fan trav}
\STATE Find $w\in Q(\gamma)$ such that $\eta\seite\lambda_\Omega(w)$ is a facet and $-v\in\lambda_\Omega(w)^\vee$.
\IF{$G \cdot h_\Omega(\lambda_\Omega(w))\cap \mathcal H =\emptyset$}\label{line:step14}
\STATE $\mathcal C := \mathcal C \cup \{\lambda_\Omega(w)\}$\label{line:step15}
\STATE $\mathcal H := \mathcal H \cup \{h_\Omega(\lambda_\Omega(w))\}$\label{line:step16}
\STATE $\mathcal F := \mathcal F \ominus \{(\tilde\eta,\tilde v)\mid\tilde \eta\seite\lambda_\Omega(w) \text{ interior facet, } \tilde v\in\lambda_\Omega(w)^\vee \text{ its inner normal vector}\}$
\ELSE
\STATE $\mathcal F := \mathcal F\setminus \{(\eta,v)\}$\label{line:step19}
\ENDIF
\ENDWHILE
\RETURN $\mathcal C$
\end{algorithmic}
\end{algo}

Examples for the use of Algorithm~\ref{algo:gitfanWithSymmetry} are given in Section~\ref{examples}.
We turn to the proof of Algorithm~\ref{algo:gitfanWithSymmetry}.
A first step is to show that the reduction of the set of orbit cones (see Line~\ref{line:step7}) and therefore also of the set of $\aa$-faces does not change the resulting GIT-fan, that is, we have to show that it suffices to consider the minimal orbit cones.

We call $Q(\gamma_{0})\in \Omega(k)$, where $\gamma_{0}\preceq\gamma$ is an $\mathfrak{a}$-face, a \emph{minimal}
orbit cone if for each full-dimensional cone $Q(\gamma_{1})\ne Q(\gamma_{0})$, where
$\gamma_{1}\preceq\gamma$ is an $\mathfrak{a}$-face, we have $Q(\gamma_{1})\not \subseteq Q(\gamma_{0})$.

\goodbreak

\begin{lemma}
\label{lem:minimal}
For the computation of the GIT-fan $\Lambda(\mathfrak{a},Q)$, it
suffices to consider the set $\Omega(k)_{\rm min}$ of minimal full-dimensional orbit cones, that is, given $w\in Q(\gamma)^{\circ}$, we have
\[
\lambda(w)\ =\ \bigcap_{\substack{\vartheta\in\Omega(k)_{\rm min},\\ w\in\vartheta}}\vartheta.
\]
\end{lemma}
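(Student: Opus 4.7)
The plan is to derive the asserted identity from the definition $\lambda(w)=\bigcap_{\vartheta\in\Omega,\,w\in\vartheta^{\circ}}\vartheta$ by two successive reductions of the indexing family: first from $\Omega$ to $\Omega(k)$, then from $\Omega(k)$ to $\Omega(k)_{\min}$.

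For the first reduction, I would use that $w\in Q(\gamma)^{\circ}$ forces the GIT-cone $\lambda(w)$ to be of full dimension $k$, since $Q(\gamma)^{\circ}$ is covered by the relative interiors of the $k$-dimensional cones of $\Lambda(\aa,Q)$. Any $\vartheta\in\Omega$ with $w\in\vartheta^{\circ}$ satisfies $\lambda(w)\subseteq\vartheta$, so $\dim\vartheta=k$ and $\vartheta$ already lies in $\Omega(k)$. At the same time I would switch from $w\in\vartheta^{\circ}$ to $w\in\vartheta$: for $w$ generic in the interior of its maximal GIT-cone, $w$ lies on no relative boundary of any orbit cone (the GIT-fan refines $\Omega$), so the two conditions coincide for every $\vartheta\in\Omega(k)$. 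This yields the intermediate identity $\lambda(w)=\bigcap_{\vartheta\in\Omega(k),\,w\in\vartheta}\vartheta$.

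For the second reduction, the inclusion $\lambda(w)\subseteq\bigcap_{\vartheta\in\Omega(k)_{\min},\,w\in\vartheta}\vartheta$ is immediate from $\Omega(k)_{\min}\subseteq\Omega(k)$. For the converse, it suffices to show that every $\vartheta\in\Omega(k)$ with $w\in\vartheta$ contains some $\vartheta_{0}\in\Omega(k)_{\min}$ with $w\in\vartheta_{0}$. I would produce such a $\vartheta_{0}$ by descending through the finite poset $(\Omega(k),\subseteq)$: as long as the current cone is not $\subseteq$-minimal in $\Omega(k)$, replace it by a strictly smaller full-dimensional orbit cone still containing $w$. Since $\Omega(k)$ is finite, the procedure terminates at some $\vartheta_{0}\in\Omega(k)_{\min}$, and then $\bigcap_{\vartheta\in\Omega(k)_{\min},\,w\in\vartheta}\vartheta\subseteq\vartheta_{0}\subseteq\vartheta$, which finishes the argument.

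The main obstacle is the descent step: one must guarantee that whenever $\vartheta\in\Omega(k)$ is non-minimal and $w\in\vartheta$, there is a strictly smaller $\vartheta'\in\Omega(k)$ still containing $w$. This requires the structural input that the minimal full-dimensional orbit cones already cover $Q(\gamma)^{\circ}$, which I expect to recover from the Berchtold--Hausen construction together with the description of orbit cones as projections of $\aa$-faces of~$\gamma$. Once this covering property is in hand, everything else is a purely formal manipulation of intersections in the finite family $\Omega(k)$.
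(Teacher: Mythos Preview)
Your first reduction, passing from $\Omega$ to $\Omega(k)$, is essentially what the paper does (it simply cites~\cite{Ke}); your justification is a bit informal but salvageable.

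The genuine gap is in the descent step. You want: whenever $\vartheta\in\Omega(k)$ is non-minimal and $w\in\vartheta$, there exists $\vartheta'\in\Omega(k)$ with $\vartheta'\subsetneq\vartheta$ and $w\in\vartheta'$. Non-minimality of $\vartheta$ only guarantees \emph{some} strictly smaller $\vartheta'\in\Omega(k)$; there is no reason that particular $\vartheta'$ contains $w$. You then assert that the covering property ``minimal full-dimensional orbit cones cover $Q(\gamma)^\circ$'' suffices, but it does not: covering produces a minimal $\vartheta_0$ with $w\in\vartheta_0$, yet nothing forces $\vartheta_0\subseteq\vartheta$, which is what your chain of inclusions $\bigcap_{\vartheta'\in\Omega(k)_{\min},\,w\in\vartheta'}\vartheta'\subseteq\vartheta_0\subseteq\vartheta$ requires. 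One can write down abstract families of full-dimensional cones in $\QQ^2$ where the minimal members cover the ambient cone, yet a non-minimal member $C$ contains a point $w$ not lying in any minimal cone inside $C$; for such families the analogue of the lemma is false. So the lemma genuinely depends on special features of orbit cones, not on a formal poset argument plus covering.

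The paper's proof supplies exactly this missing structural input, but via a different route. It argues by contradiction: if some non-minimal $\tau\in\Omega(k)$ cuts the intersection $\lambda_0$ over $\Omega(k)_{\min}$ strictly, then a facet $\eta\preceq\tau$ meets $\lambda_0^\circ$. The crucial fact (from Berchtold--Hausen) is that faces of orbit cones are again orbit cones, so $\eta\in\Omega$, and the GIT-fan is the coarsest common refinement of $\Omega$. This forces the existence of a full-dimensional orbit cone $\tau'$ on the opposite side of the supporting hyperplane of $\eta$ with $\tau'\cap\tau=\eta$; taking $\tau'$ minimal yields the contradiction. None of this is visible from the bare poset $(\Omega(k),\subseteq)$, and your proposal does not invoke it.
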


\begin{proof} See~\cite{Ke} for the fact that $\Omega$ can be replaced by $\Omega(k)$ in the computation of $\lambda(w)$.
For the minimality, assume for some $w\in\gamma^{\circ}$, there was a cone $\tau\in\Omega(k)
\setminus\Omega(k)_{\rm min}$ with $w\in\tau^{\circ}$ such that
\[
\lambda^{+}\ :=\ \tau\cap\bigcap_{\vartheta\in\Omega(k)_{\rm min},\ w\in\vartheta}
\vartheta\ \subsetneq\ \bigcap_{\vartheta\in\Omega(k)_{\rm min},\ w\in\vartheta}
\vartheta\ =:\ \lambda_{0}.
\]
We may further assume that the GIT-cone $\lambda(w)$ is of full dimension and
that $\lambda^{+}=\lambda(w)$. Then there is a facet $\eta\preceq\tau$ with
$\eta^{\circ}\cap\lambda_{0}^{\circ}\ne\emptyset$.

Choosing a supporting hyperplane $H(\eta)$ for $\eta$ such that $\lambda^{+}\subseteq H(\eta)^{+}$, where by $H(\eta)^+$ we denote the positive halfspace defined by $H(\eta)$.
We see that there is
\[
w^{-}\ \in\ H(\eta)^{-}\cap\lambda_0^\circ\ \subseteq\ \lambda_{0}^{\circ
}\setminus\lambda^{+}.
\]
Since $\eta\in\Omega$ by \cite{BeHa} and the GIT-fan $\Lambda(\mathfrak{a},Q)$
is a fan constructed as the coarsest common refinement of all elements of
$\Omega$, the cone $\eta\in\Omega$ is a union of GIT-cones $\lambda
(w_{1}),\ldots,\lambda(w_{s})$ of codimension at least one. Since also
$\lambda(w^{-})$ must be a full-dimensional GIT-cone, there must be
$\tau^{\prime}\in\Omega$ with
\[
(\tau^{\prime})^{\circ}\cap\tau^{\circ}\ =\ \emptyset\qquad\text{and}%
\qquad\tau^{\prime}\cap\tau\ =\ \eta.
\]
We can choose $\tau^{\prime}$ minimal with this property and arrive at
$\tau^{\prime}\in\Omega(k)_{\rm min}$.
Then  $\lambda^{+}\subsetneq\lambda_{0}$ cannot be a subset, a contradiction.
\end{proof}

\begin{lemma}
 \label{lem:symmafaces}
 In the above setting, let $\gamma_0\seite \gamma$ be a face and let $\sigma\in G$.
 Then $\gamma_0$ is an $\aa$-face if and only if
 $\sigma(\gamma_0)$ is an $\aa$-face.
\end{lemma}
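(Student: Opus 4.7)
The plan is to translate the defining condition $X\cap \mathbb{T}_{\gamma_0}\neq \emptyset$ into a statement about a geometric morphism induced by $\sigma$, and then observe that this morphism simultaneously preserves $X$ and permutes the toric strata, so it restricts to a bijection of $X\cap \mathbb{T}_{\gamma_0}$ with $X\cap \mathbb{T}_{\sigma(\gamma_0)}$ (up to replacing $\sigma$ by $\sigma^{-1}$).

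First, I would dualize the ring action. The $\mathbb{K}$-algebra automorphism $T_j\mapsto c_{\sigma,j}T_{\sigma(j)}$ of $\mathbb{K}[T_1,\ldots,T_r]$ corresponds to a linear automorphism $\phi_\sigma\colon \mathbb{K}^r\to\mathbb{K}^r$, explicitly given by $\phi_\sigma(x)_j = c_{\sigma,j}\,x_{\sigma(j)}$. Because $G\cdot\mathfrak{a}=\mathfrak{a}$, for every $f\in\mathfrak{a}$ the element $\sigma(f)$ again lies in $\mathfrak{a}$, so $f(\phi_\sigma(x))=\sigma(f)(x)=0$ whenever $x\in X=V(\mathfrak{a})$. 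Hence $\phi_\sigma$ maps $X$ into $X$, and by applying the same argument to $\sigma^{-1}$ one gets that $\phi_\sigma$ restricts to an automorphism of $X$.

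Second, I would check that $\phi_\sigma$ permutes the toric strata in the way dictated by $\sigma$. Writing $I:=\{i:e_i\in\gamma_0\}$, the stratum $\mathbb{T}_{\gamma_0}$ consists precisely of the vectors $x\in\mathbb{K}^r$ with support equal to $I$. Since $c_{\sigma,j}\in\mathbb{K}^{*}$, we have $\phi_\sigma(x)_j\neq 0$ iff $x_{\sigma(j)}\neq 0$ iff $\sigma(j)\in I$ iff $j\in\sigma^{-1}(I)$. Thus $\phi_\sigma(\mathbb{T}_{\gamma_0})=\mathbb{T}_{\sigma^{-1}(\gamma_0)}$, using the convention $\sigma(\gamma_0)=\operatorname{cone}(e_{\sigma(i)}:i\in I)$ from Definition~\ref{def:symmgrp}.

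Combining the two observations, $\phi_\sigma$ restricts to a bijection between $X\cap\mathbb{T}_{\gamma_0}$ and $X\cap\mathbb{T}_{\sigma^{-1}(\gamma_0)}$. In particular, one is empty iff the other is, which gives the equivalence ``$\gamma_0$ is an $\mathfrak{a}$-face $\Leftrightarrow$ $\sigma^{-1}(\gamma_0)$ is an $\mathfrak{a}$-face''; replacing $\sigma$ by $\sigma^{-1}\in G$ yields the claim as stated. The only real subtlety is the bookkeeping of whether the induced action on supports uses $\sigma$ or $\sigma^{-1}$; since $G$ is a group and the statement is an ``if and only if'', this cancels out, making the argument essentially a one-line consequence of the definitions.
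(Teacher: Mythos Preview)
Your argument is correct and is essentially the same as the paper's: both exploit that the geometric automorphism of $\mathbb{K}^r$ dual to the ring action preserves $V(\mathfrak{a})$ and permutes the torus strata $\mathbb{T}_{\gamma_0}$. The paper compresses this into a four-line chain of equivalences and tacitly absorbs the $\sigma$ versus $\sigma^{-1}$ bookkeeping you made explicit.
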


\begin{proof}
Write $z_{\gamma_0}$ for the $\gamma_0$-restriction
of $z := (1,\ldots,1)\in \KK^r$ as in Section~\ref{sec computing GIT-fans}. With $\mathbb{T} := (\KK^*)^r$, we have
 \begin{align*}
\gamma_{0}\text{ is an $\mathfrak{a}$-face} \quad &  \ \Longleftrightarrow
\quad V(\mathfrak{a})\cap(\mathbb{T}^{r}\cdot z_{\gamma_{0}})\ \neq\ \emptyset\\
&  \ \Longleftrightarrow\quad \sigma\left(  V(\mathfrak{a})\right)  \cap
\sigma(\mathbb{T}^{r}\cdot z_{\gamma_{0}})\ \neq\ \emptyset\\
&  \ \Longleftrightarrow\quad V(\mathfrak{a})\cap(\mathbb{T}^{r}\cdot z_{\sigma
(\gamma_{0})})\ \neq\ \emptyset\\
&  \ \Longleftrightarrow\quad \sigma(\gamma_{0})\text{ is an $\mathfrak{a}$-face.}\qedhere
\end{align*}
\end{proof}

\begin{proof}
[Proof of Algorithm~\ref{algo:gitfanWithSymmetry}]
Before we start with the proof of correctness of the output, note first that by Lemma~\ref{lem:symmafaces}, the set $G\cdot \mathcal A$, with $\mathcal A$ as constructed in Lines~\ref{line:step1} through~\ref{line:step5}, is indeed the set of $\mathfrak a$-faces.
Taking into account the induced action on the set of orbit cones, $\Omega$ as constructed in Step~\ref{line:step6} is indeed the set of orbit cones. Hence, by Lemma~\ref{lem:minimal}, restricting to the minimal orbit cones of maximal dimension in Step~\ref{line:step7} will not change the GIT-cones $\lambda_\Omega(w)$ computed in the remainder of the algorithm.

For correctness, we first show that $\mathcal C$ is a list of representatives for the orbits of the maximal cones of the GIT-fan, that is, we have $G\cdot \mathcal C=\Lambda(\mathfrak a, Q)(k)$.

For the inclusion ``$\subseteq$'',
  note that $\mathcal C\subseteq \Lambda(\mathfrak a,Q)(k)$ by correctness of Algorithm~\ref{algo:gitfan}.
  Moreover, given $\sigma\cdot \lambda_\Omega(w)\in G\cdot \mathcal C$ for some $\lambda_\Omega(w)\in \mathcal C$, we have
  \begingroup\allowdisplaybreaks
   \begin{eqnarray*}
    \sigma\cdot \lambda_\Omega(w)
    \,=\,
    A_{\sigma}\cdot\bigcap_{\substack{\theta\in\Omega, \\ w\in\theta}}\theta
    \,=\,
    A_\sigma\cdot\!\! \bigcap_{\substack{\theta\in\Omega, \\ A_\sigma^{-1}\cdot w\in\theta}}A_{\sigma}^{-1}\cdot\theta
    \,=\,
    \bigcap_{\substack{\theta\in\Omega, \\ A_\sigma^{-1}\cdot w\in\theta}}\theta
    \,=\,
    \lambda_\Omega(\sigma^{-1}\cdot w)
  \end{eqnarray*}
    \endgroup
  where the second equality holds because the $A_\sigma$ are linear isomorphisms permuting elements of $\Omega$, and the final inclusion again follows from the correctness of Algorithm \ref{algo:gitfan}.
    In particular, $\sigma\cdot \lambda_\Omega(w)$ is an element of $\Lambda(\aa,Q)$.

  We now prove the inclusion ``$\supseteq$''.
Consider $\lambda\in\Lambda(\mathfrak a,Q)(k)$. Let $\lambda_0$ denote the starting cone of Algorithm \ref{algo:gitfanWithSymmetry}. Define
  \[ d(\lambda)\,:=\,\min\left\{n\in\mathbb{N}\suchthat \begin{array}{c}\text{there are }\lambda_n:=\lambda,\lambda_{n-1},\ldots,\lambda_1\in\Lambda(\mathfrak a,Q)(k) \text{ such that} \\ \lambda_{i}\cap\lambda_{i-1} \text{ is a facet of both $\lambda_{i}$ and $\lambda_{i-1}$ for } i=1,\ldots,n\end{array}\right\}. \]
  Observe that such a chain of maximal GIT-cones always exists, so that $d(\lambda)$ is well-defined. We now do an induction on $d(\lambda)$ to prove that $\lambda\in G\cdot\mathcal C$, see Figure \ref{fig GIT group action}.

\begin{figure}[H]
\begin{center}
  \begin{tikzpicture}[scale=.82, transform shape]
    \fill[black!20] (-1.75,0) -- (-0.75,-1) -- (0.75,-1) -- (1.75,0) -- (0.75,1) -- (-0.75,1) -- cycle;
    \fill[black!20,xshift=5cm,yshift=2cm] (-1.75,0) -- (-0.75,-1) -- (0.75,-1) -- (1.75,0) -- (0.75,1) -- (-0.75,1) -- cycle;
    \fill[black!20,xshift=7.5cm,yshift=3cm] (-1.75,0) -- (-0.75,-1) -- (0.75,-1) -- (1.75,0) -- (0.75,1) -- (-0.75,1) -- cycle;

    \draw[yshift=6cm] (-1.75,0) -- (-0.75,-1) -- (0.75,-1) -- (1.75,0) -- (0.75,1) -- (-0.75,1) -- cycle;
    \draw[yshift=4cm] (-1.75,0) -- (-0.75,-1) -- (0.75,-1) -- (1.75,0) -- (0.75,1) -- (-0.75,1) -- cycle;
    \draw (-1.75,0) -- (-0.75,-1) -- (0.75,-1) -- (1.75,0) -- (0.75,1) -- (-0.75,1) -- cycle;
    \draw[xshift=5cm,yshift=2cm] (-1.75,0) -- (-0.75,-1) -- (0.75,-1) -- (1.75,0) -- (0.75,1) -- (-0.75,1) -- cycle;
    \draw[xshift=7.5cm,yshift=3cm] (-1.75,0) -- (-0.75,-1) -- (0.75,-1) -- (1.75,0) -- (0.75,1) -- (-0.75,1) -- cycle;

    \node[yshift=6cm] at (0,0) {$\lambda=\lambda_n$};
    \node[yshift=4cm] at (0,0) {$\lambda_{n-1}$};
    \node at (0,0) {$\lambda_0$};
    \node[xshift=5cm,yshift=2cm] at (0,0) {$\lambda_{n-1}'$};
    \node[xshift=7.5cm,yshift=3cm] at (0,0) {$\lambda_n'$};

    \draw[black,thick,yshift=4cm] (0.75,1) -- (-0.75,1);
    \fill[black,yshift=4cm] (0.75,1) circle (1.5pt);
    \fill[black,yshift=4cm] (-0.75,1) circle (1.5pt);
    \node[anchor=west,xshift=0.15cm,yshift=4cm] at (0.75,1) {$\eta$};

    \draw[black,thick,xshift=5cm,yshift=2cm] (1.75,0) -- (0.75,1);
    \fill[black,xshift=5cm,yshift=2cm] (1.75,0) circle (1.5pt);
    \fill[black,xshift=5cm,yshift=2cm] (0.75,1) circle (1.5pt);
    \node[anchor=north west,xshift=5cm,yshift=2cm] at (1.75,0) {$\eta'$};

    \path[->,shorten <=1cm, shorten >=1cm] (5.75,3) edge[bend right] node[anchor=south west] {$\sigma$ resp. $A_\sigma$} (0.75,5);
    \draw[draw opacity=0] (0,0) -- node[sloped] {$\ldots\ldots$} (0,4);
    \draw[draw opacity=0] (0,0) -- node[sloped] {$\ldots\ldots$} (5,2);

    \node at (6,0) {maximal cones in $\mathcal C$};
    \node[anchor=west] at (2,6.75) {maximal cones in $G\cdot \mathcal C$};
  \end{tikzpicture}
\end{center}
\caption{Group action on maximal GIT-cones.}
\label{fig GIT group action}
\end{figure}

  If $d(\lambda)=0$, then $\lambda=\lambda_0$ and $\lambda\in\mathcal C\subseteq G\cdot \mathcal C$ by construction. So suppose $n:=d(\lambda)>0$. Let $\lambda_n:=\lambda,\lambda_{n-1},\ldots,\lambda_1\in\Lambda(\mathfrak a,Q)(k)$ be such that $\lambda_{i}\cap\lambda_{i-1}$ is a facet of both for $i=1,\ldots,n$. By induction, $\lambda_{n-1}\in G\cdot\mathcal C$. This means that there exists a $\lambda_{n-1}'\in \mathcal C$ such that $\lambda_{n-1}=\sigma\cdot\lambda_{n-1}'$ for some $\sigma\in G$. Setting $\eta:=\lambda_n\cap\lambda_{n-1}$, the image $\eta':=\sigma^{-1}\cdot \eta$ is an interior facet of $\lambda_{n-1}'$ so that $(\eta',v')\in\mathcal F$ for a vector $v'\in(\lambda_{n-1}')^\vee$ at some step of the iteration.

  Take $\lambda_n'\in\Lambda(\mathfrak a,Q)(k)$ with $\lambda_{n-1}'\cap\lambda_n'=\eta'$.
  By Steps~\ref{line:step14} and~\ref{line:step15}, we then have $\theta\cdot \lambda_n' \in \mathcal C$ for some $\theta\in G$, possibly $\theta=e$. Hence, we obtain
  \[ \lambda_n = \sigma \cdot \lambda_n' \in G\cdot \mathcal C, \]
  as both sides of the equation are maximal cones of a polyhedral fan $\Lambda(\mathfrak a,Q)$ intersecting another maximal cone $\lambda_{n-1}$ in the same facet $\eta$.
Having shown $G\cdot \mathcal C=\Lambda(\mathfrak a, Q)(k)$, Steps~\ref{line:step14} and~\ref{line:step15} imply that $\mathcal C$ is a distinct system of representatives, finishing our proof for correctness.

For the termination, note that in each iteration of Steps~\ref{line:step12} through~\ref{line:step19} we either obtain a new GIT-cone $\lambda_\Omega(w)\in\mathcal C$, of which there are only finitely many, or the cardinality of the finite set $\mathcal F$ decreases by one. Hence the algorithm eventually terminates.
\end{proof}

We close this section with a series of remarks concerning the efficiency of Algorithm~\ref{algo:gitfanWithSymmetry} and sketching further improvements.

\begin{remark}
Instead of applying direct inclusion tests between orbit cones, Line~\ref{line:step7} can also be realized in a more efficient way by making use of the $G$-action:
 with $\aa$-faces $\gamma_i\seite \gamma$, we write
$$
G\cdot\gamma_{0} \,\sqsubseteq\, G\cdot\gamma_{1}
\quad
:\Longleftrightarrow
\quad
\text{for each $\gamma_{3}\in(G\cdot\gamma_{1})$ there is $\gamma_{2}\in(G\cdot\gamma_{0})$ with
 $\gamma_{2}\preceq\gamma_{3}$}.
$$
Defining
\begin{gather*}
\Omega_{1}
\  :=
\!\!\!\!
\bigcup_{\substack{G\cdot\gamma_{1}\text{ min.~w.r.t.~$\sqsubseteq$}%
\\\text{$\gamma_1$ $\aa$-face, $Q(\gamma_1)\in \Omega(k)$}}}
\!\!\!\!\!\!
Q(G\cdot\gamma_{1})
,\quad\qquad
\Omega_{2}\  :=\ \{\vartheta\in\Omega_{1}\mid\ \vartheta\text{ minimal w.r.t.}\,\subseteq\},
\end{gather*}
it then suffices to consider either one of the $\Omega_i$ instead of $\Omega$ in Line~\ref{line:step7} of Algorithm~\ref{algo:gitfanWithSymmetry} since $\Omega(k)_{\rm min}\subseteq \Omega_i$ for both~$i$. Hence, Lemma~\ref{lem:minimal} applies as well.
Note that $\Omega_1$ might be bigger than $\Omega(k)_{\rm min}$ but has the advantage that one can do the tests directly on the $\aa$-faces.
 \end{remark}

\begin{remark}
For the implementation of the algorithm it is not necessary to compute the
rays of the GIT-cones, we only use the descriptions in terms of half-spaces and hyperplanes.
\end{remark}

\begin{remark}
[Parallel computing]
The computations in the loop in Line~\ref{afacesloop1} are
independent, hence can be performed in parallel.  
A further improvement of the performance can be obtained by using a parallel approach to the fan-traversal.
\end{remark}

\begin{remark} 

An improvement of the memory usage can be achieved by the following strategy: Instead of listing the open facets in $\mathcal F$, we keep track of the maximal cones with open facets. For each such cone, we compute all its neighbouring cones in one iteration.
\end{remark}

\section{Examples\label{examples}}

In this section, we present two basic examples
for Algorithm~\ref{algo:gitfanWithSymmetry}
and explain
how they can be computed using our \textsc{Singular}-implementation~\cite{gitfanlib}.

\begin{example}
\label{ex:cube} Consider the polynomial ring $\mathbb{K}[T_{1},\ldots,T_{4}]$
with the $\mathbb{Z}^{2}$-grading $\deg(T_{j})=q_{j}$ given by the columns
\[
Q\ =\
\left[  q_{1},\ldots, q_{4}\right]  \ =\ \left[  \mbox{\footnotesize $
\begin{array}
[c]{rrrr}1 & -1 & -1 & 1\\
1 & 1 & -1 & -1
\end{array}
$}\right].
\]
Moreover, consider the principal ideal $\mathfrak{a}\subseteq \KT{4}$ generated by $g:=T_{1}T_{3}-T_{2}T_{4}$.
A~symmetry group $G$ for the graded algebra $\mathbb{K}%
[T_{1},\ldots,T_{4}]/\mathfrak{a}$ is then the symmetry group of the square%
\begin{center}
 \begin{minipage}{4cm}
\begin{align*}
G\,&=\,D_{4}\\
&=\,\left\langle (1,2)(3,4),(1,2,3,4)\right\rangle\\
&\leq\ S_{4}%
\end{align*}
 \end{minipage}
\qquad
\begin{minipage}{4cm}

\begin{center}
\begingroup
\begin{tikzpicture}[scale=1.2]
\coordinate (w1) at (1,1);
\coordinate (w2) at (-1,1);
\coordinate (w3) at  (-1,-1);
\coordinate (w4) at  (1,-1);
\coordinate (O) at  (0,0);
\fill[color=black!30] (w1)--(w2)--(w3)--(w4)--cycle;
\draw (w1)--(w2)--(w3)--(w4)--cycle;
\foreach \w in {1,...,4} {
\fill (w\w) circle (1.4pt);
}
\fill (O) circle (1.4pt) node[anchor=west]{$ $};
\draw (w1) node[anchor=west]{$q_1$};
\draw (w2) node[anchor=east]{$q_2$};
\draw (w3) node[anchor=east]{$q_3$};
\draw (w4) node[anchor=west]{$q_4$};
\draw[dashed] (0,-1.3) -- (0,1.3) node{\small };
\draw[dashed] (-1.3,0) -- (1.3,0) node{\small };
\draw[dashed] (-1.3,-1.3) -- (1.3,1.3) node{\small };
\draw[dashed] (-1.3,1.3) -- (1.3,-1.3) node{\small };
\pgfmathsetmacro{\ex}{0}
\pgfmathsetmacro{\ey}{0}
\draw[dashed,->,>=latex] (\ex,\ey) ++(45:.8) arc (45:135:.8);
\draw[dashed,->,>=latex] (\ex,\ey) ++(135:.8) arc (135:225:.8);
\draw[dashed,->,>=latex] (\ex,\ey) ++(225:.8) arc (225:315:.8);
\draw[dashed,->,>=latex] (\ex,\ey) ++(315:.8) arc (315:405:.8);
\end{tikzpicture}
\endgroup
\end{center}
 \end{minipage}
\end{center}

Write the canonical basis vectors $e_{1},e_{2}\in\mathbb{Z}^{2}$ as $e_{1}=-(q_{2}+q_{3})/2$ and $e_{2}=(q_{1}+q_{2})/2$.
The action of $G$ on $\mathbb{Q}^{2}$, in the sense of Definition~\ref{def:symmgrp}, is then given by
\[
A_{(1,2)(3,4)}\ =\ \left[  \mbox{\footnotesize $
\begin{array}{rr}
-1 & 0\\
0 & 1
\end{array}
$}\right]
,\qquad A_{(1,2,3,4)}\ =\ \left[  \mbox{\footnotesize $
\begin{array}{rr}
0 & -1\\
1 & 0
\end{array}
$}\right]  \ \in\ \mathrm{GL}(2,\mathbb{Z}).
\]
The action of $G$ decomposes the set of faces of the positive orthant
$\mathbb{Q}_{\geq0}^{4}$ into the disjoint union
\[
\{\gamma_{0}\}\ \cup\ (G\cdot\gamma_{1})\ \cup\ (G\cdot\gamma_{2}%
)\ \cup\ (G\cdot\gamma_{2}^{\prime})\ \cup\ (G\cdot\gamma_{3})\ \cup
\ \{\gamma_{4}\}
\]
where the cones $\gamma_i$, the size of their orbits, and the corresponding generators $g(T_{\gamma_i})$ in the sense of Section~\ref{sec computing GIT-fans} are as follows:
\[
\begin{tabular}
[c]{l|l|l}%
$\gamma$ & $\left\vert G\cdot\gamma\right\vert $ & $g(T_{\gamma})$\\\hline
$\gamma_{0}=\operatorname{cone}(0)$ & $1$ & $0$\\
$\gamma_{1}=\operatorname{cone}(e_{1})$ & $4$ & $0$\\
$\gamma_{2}=\operatorname{cone}(e_{1},e_{2})$ & $4$ & $0$\\
$\gamma_{2}^{\prime}=\operatorname{cone}(e_{1},e_{3})$ & $2$ & $T_{1}T_{3}$\\
$\gamma_{3}=\operatorname{cone}(e_{1},e_{2},e_{3})$ & $4$ & $T_{1}T_{3}$\\
$\gamma_{4}=\operatorname{cone}(e_{1},e_{2},e_{3},e_{4})$ & $1$ & $g$%
\end{tabular}
\ \
\ \
\]
Hence, the set of $\mathfrak{a}$-faces is given by the union
$\{\gamma_{0}\}\cup(G\cdot\gamma_{1})\cup(G\cdot\gamma_{2})\cup\{\gamma_{4}%
\}$.
Projecting the representatives of the respective orbits yields
\begin{gather*}
Q(\gamma_{0})\ =\ \mathrm{cone}(0),\qquad
Q(\gamma_{1})\ =\ \mathrm{cone}%
\left(  \mbox{\footnotesize $\left[\begin{array}{r}
1\\
1
\end{array}
\right]$}\right)  ,\\
Q(\gamma_{2})\ =\ \mathrm{cone}\left(  \mbox{\footnotesize $
	      \left[\begin{array}{r}
1\\
1
\end{array}
\right],\left[\begin{array}{r}
-1\\
1
\end{array}
\right] $}\right)  ,\qquad Q(\gamma_{4})\ =\ \mathbb{Q}^{2}.
\end{gather*}
We choose the weight vector $w_{0}:=(0,1)\in\mathbb{Z}^{2}$ and compute the
corresponding GIT-cone $\lambda(w_{0})=Q(\gamma_{2})$. By applying
$A_{(1,2,3,4)}$ successively, we obtain the remaining three maximal cones of
the GIT-fan $\Lambda(\mathfrak{a},Q)$ as depicted in the following figure:

\begin{center}
\begingroup
\begin{tikzpicture}[scale=1.2, transform shape,font=\footnotesize]
\coordinate (w1) at (1,1);
\coordinate (w2) at (-1,1);
\coordinate (w3) at  (-1,-1);
\coordinate (w4) at  (1,-1);
\coordinate (O) at  (0,0);
\fill[color=black!20] (w1)--(w2)--(w3)--(w4)--cycle;
\foreach \w in {1,...,4} {
\fill (w\w) circle (1.4pt);
}
\fill (O) circle (1.4pt) node[anchor=west]{{\tiny $(0,0)$ }};
\draw (w1) node[anchor=west]{$q_1$};
\draw (w2) node[anchor=east]{$q_2$};
\draw (w3) node[anchor=east]{$q_3$};
\draw (w4) node[anchor=west]{$q_4$};
\draw(w1) -- (w3);
\draw(w2) -- (w4);
\draw (0,.7) node{$\lambda(w_0)$};
\end{tikzpicture}
\endgroup
\end{center}

Using our implementation of Algorithm \ref{algo:gitfanWithSymmetry} in the
\textsc{Singular} library \textsc{gitfan.lib} we can compute the GIT-fan
up to symmetry using the command \texttt{GITfan(a, Q, G)}, where \texttt{a}, \texttt{Q} and \texttt{G}
stand for the ideal $\aa$, the matrix $Q$, and the symmetry group $G\subseteq S_r$, respectively.
\end{example}

As a second example, we compute the
Mori chamber decomposition of $\overline M_{0,5}$,
thereby reproducing results of  Arzhantsev/Hausen~\cite[Example~8.5]{AH}, Bernal~\cite{Ber}, and Dolgachev/Hu~\cite[3.3.24]{DolgachevHu} by making use of our symmetric GIT-fan algorithm.

\begin{example}
The Cox ring of $\overline M_{0,5}$ is isomorphic to the coordinate ring $R=\KK[T_1,\ldots,T_{10}]/\aa$ of the affine cone over the Grassmannian $\mathbb{G}(2,5)$ where the ideal $\aa$ is generated by the Pl\"ucker relations
\begin{center}
\begin{minipage}{3cm}
\begingroup\footnotesize
\begin{align*}
T_{5}T_{10}-T_{6}T_{9}+T_{7}T_{8},\\
  T_{1}T_{9}-T_{2}T_{7}+T_{4}T_{5},\\
  T_{1}T_{8}-T_{2}T_{6}+T_{3}T_{5},\\
  T_{1}T_{10}-T_{3}T_{7}+T_{4}T_{6},\\
  T_{2}T_{10}-T_{3}T_{9}+T_{4}T_{8}
\end{align*}
\endgroup
\end{minipage}
\qquad
\begin{minipage}{5cm}
 \[
 Q:=
 \left[
 \mbox{\footnotesize $
\begin{array}{rrrrrrrrrr}
  1 & 1 & 1 & 1 & 0 & 0 & 0 & 0 & 0 & 0\\
  1 & 0 & 0 & 0 & 1 & 1 & 1 & 0 & 0 & 0\\
  0 & 1 & 1 & 0 & 0 & 0 & -1 & 1 & 0 & 0\\
  0 & 1 & 0 & 1 & 0 & -1 & 0 & 0 & 1 & 0\\
  0 & 0 & 1 & 1 & -1 & 0 & 0 & 0 & 0 & 1
\end{array}$}
\right]
\]
\end{minipage}
\end{center}
and the $i$-th column of the matrix $Q$
is the degree $\deg(T_i)\in \ZZ^5$; this determines the $\ZZ^5$-grading of~$R$.
Using, e.g.,~\cite[Example~5.5]{HaKeWo}, we observe that there is an $S_5$-symmetry for the $H\cong (\KK^*)^5$-action on $V(\aa)$ where the symmetry group $S_5\cong G\subseteq S_{10}$ is generated by
\[
(2, 3)(5, 6)(9, 10),\qquad (1, 5, 9, 10, 3)(2, 7, 8, 4, 6)\quad \in\quad S_{10}.
\]
On the Cox ring,  $10$ of the $120$ elements of $G$ act by permutation of variables, whereas the remaining ones permute variables with a sign change.

We now apply Algorithm~\ref{algo:gitfanWithSymmetry} with input $\aa$, $Q$ and $G$ and obtain the following results:
By making use of the $S_5$-action, the number monomial containment tests via Algorithm~\ref{algo:isaface} can be reduced from $2^{10}=1024$ to $34$.
The set of $\aa$-faces consists of $172$ elements and
decomposes into $14$ orbits of lengths
\[
 1,\quad
 1,\quad
5,\quad
5,\quad
10,\quad
10,\quad
10,\quad
10,\quad
10,\quad
15,\quad
15,\quad
20,\quad
30,\quad
30.
\]
Projecting them via $Q$ yields the set $\Omega$ of $82$ orbit cones, amongst which $36$ are five-dimensional.
The set $\Omega(5)$ decomposes into the four $G$-orbits $G\cdot \vartheta_i$ with

\begin{gather*}
    \vartheta_1\,:=\,\cone\left[
    \mbox{\tiny $
    \begin{array}{rrrrrrrrrr}
    1 & 1 & 1 & 1 & 0 & 0 & 0 & 0 & 0 & 0 \\
    0 & 0 & 1 & 0 & 0 & 1 & 1 & 1 & 0 & 0 \\
    0 & 1 & 0 & 1 & 1 & 0 & 0 & -1 & 0 & 0 \\
    1 & 1 & 0 & 0 & 0 & -1 & 0 & 0 & 1 & 0 \\
    1 & 0 & 0 & 1 & 0 & 0 & -1 & 0 & 0 & 1
    \end{array}
    $}
\right],\\
    \vartheta_2\,:=\, \cone\left[
    \mbox{\tiny $
    \begin{array}{rrrrrrr}
    1 & 1 & 1 & 0 & 0 & 0 & 1 \\
    0 & 0 & 0 & 1 & 1 & 1 & 1 \\
    1 & 1 & 0 & -1 & 0 & 0 & 0 \\
    0 & 1 & 1 & 0 & 0 & -1 & 0 \\
    1 & 0 & 1 & 0 & -1 & 0 & 0
    \end{array}
    $}
    \right]
    ,\quad
    \vartheta_3\,:=\,\cone\left[
    \mbox{\tiny $
    \begin{array}{rrrrrrrrr}
    1 & 1 & 1 & 1 & 0 & 0 & 0 & 0 & 0 \\
    0 & 0 & 1 & 0 & 1 & 1 & 1 & 0 & 0 \\
    1 & 0 & 0 & 1 & 0 & -1 & 0 & 0 & 1 \\
    0 & 1 & 0 & 1 & 0 & 0 & -1 & 1 & 0 \\
    1 & 1 & 0 & 0 & -1 & 0 & 0 & 0 & 0
    \end{array}
    $}
    \right]
    ,\\
    \vartheta_4\,:=\,\cone\left[
    \mbox{\tiny $
    \begin{array}{rrrrrrrr}
    1 & 1 & 0 & 0 & 0 & 0 & 1 & 1 \\
    0 & 1 & 1 & 0 & 1 & 0 & 0 & 0 \\
    1 & 0 & -1 & 0 & 0 & 1 & 1 & 0 \\
    1 & 0 & 0 & 1 & -1 & 0 & 0 & 1 \\
    0 & 0 & 0 & 0 & 0 & 0 & 1 & 1
    \end{array}
    $}
    \right]
\end{gather*}
of respective lengths $1$, $10$, $10$,  and $15$.
Using Algorithm~\ref{algo:gitfanWithSymmetry}, we find that there are
six orbits $G\cdot \lambda_i$ of maximal GIT-cones with respective orbit lengths
\[
1,\qquad
5,\qquad
10,\qquad
10,\qquad
20,\qquad
30.
\]
This is in accordance with~\cite[Section 4.2]{Ber}.
Figure \ref{fig G25} shows the adjacency graph of the GIT-fan $\Lambda(\aa,Q)$, that is, the vertices represent the maximal cones and they are connected by an edge if and only if the corresponding GIT-cones share a common facet.
Different colors represent different orbits. Moreover, the figure shows the adjacency graph of the orbits.
Explicitly, the GIT-cones $\lambda_i$ representing the orbits are given as follows:
 \begin{gather*}
    \begin{array}{rclrcl}
       \lambda_1&:=& \cone\left[
    \mbox{\tiny $
    \begin{array}{rrrrrrrrrr}
    1 & 1 & 1 & 2 & 1 & 1 & 1 & 1 & 1 & 0 \\
    1 & 1 & 2 & 1 & 1 & 1 & 1 & 1 & 0 & 1 \\
    0 & 1 & 0 & 1 & 1 & 1 & 0 & 0 & 1 & 0 \\
    1 & 1 & 0 & 1 & 0 & 0 & 1 & 0 & 1 & 0 \\
    0 & 0 & 0 & 1 & 0 & 1 & 1 & 1 & 1 & 0
    \end{array}
    $}
    \right],
&
   \lambda_2&:=& \cone\left[
    \mbox{\tiny $
    \begin{array}{rrrrr}
    0 & 1 & 0 & 1 & 0 \\
    0 & 1 & 0 & 0 & 1 \\
    0 & 1 & 1 & 1 & 0 \\
    1 & 1 & 0 & 1 & 0 \\
    0 & 0 & 0 & 0 & -1
    \end{array}
    $}
    \right]
    ,
\\[8mm]
   \lambda_3
    &:=&
    \cone\left[
    \mbox{\tiny $
    \begin{array}{rrrrrr}
    1 & 1 & 1 & 1 & 0 & 0 \\
    1 & 2 & 1 & 1 & 1 & 1 \\
    1 & 0 & 1 & 0 & 0 & 0 \\
    0 & 0 & 1 & 1 & 0 & 0 \\
    0 & 0 & 0 & 0 & 0 & -1
    \end{array}
    $}
    \right]
,
&
    \lambda_4&:=& \cone\left[
    \mbox{\tiny $
    \begin{array}{rrrrr}
    0 & 0 & 0 & 1 & 0 \\
    0 & 1 & 1 & 1 & 1 \\
    0 & 0 & -1 & 0 & 0 \\
    1 & 0 & 0 & 1 & 0 \\
    0 & 0 & 0 & 0 & -1
    \end{array}
    $}
    \right]
  ,
\\[8mm]
  \lambda_5&:=&\cone\left[
    \mbox{\tiny $
    \begin{array}{rrrrr}
    0 & 0 & 0 & 1 & 0 \\
    0 & 1 & 0 & 1 & 1 \\
    0 & 0 & 1 & 1 & 0 \\
    1 & 0 & 0 & 1 & 0 \\
    0 & 0 & 0 & 0 & -1
    \end{array}
    $}
    \right]
,
&
      \lambda_6&:=& \cone\left[
    \mbox{\tiny $
    \begin{array}{rrrrr}
    0 & 0 & 1 & 1 & 0 \\
    0 & 1 & 1 & 1 & 1 \\
    0 & 0 & 1 & 0 & 0 \\
    1 & 0 & 1 & 1 & 0 \\
    0 & 0 & 0 & 0 & -1
    \end{array}
    $}
    \right].
        \end{array}
     \end{gather*}

 \begin{figure}[H]
\begin{center}
\includegraphics[width = 0.85\textwidth]{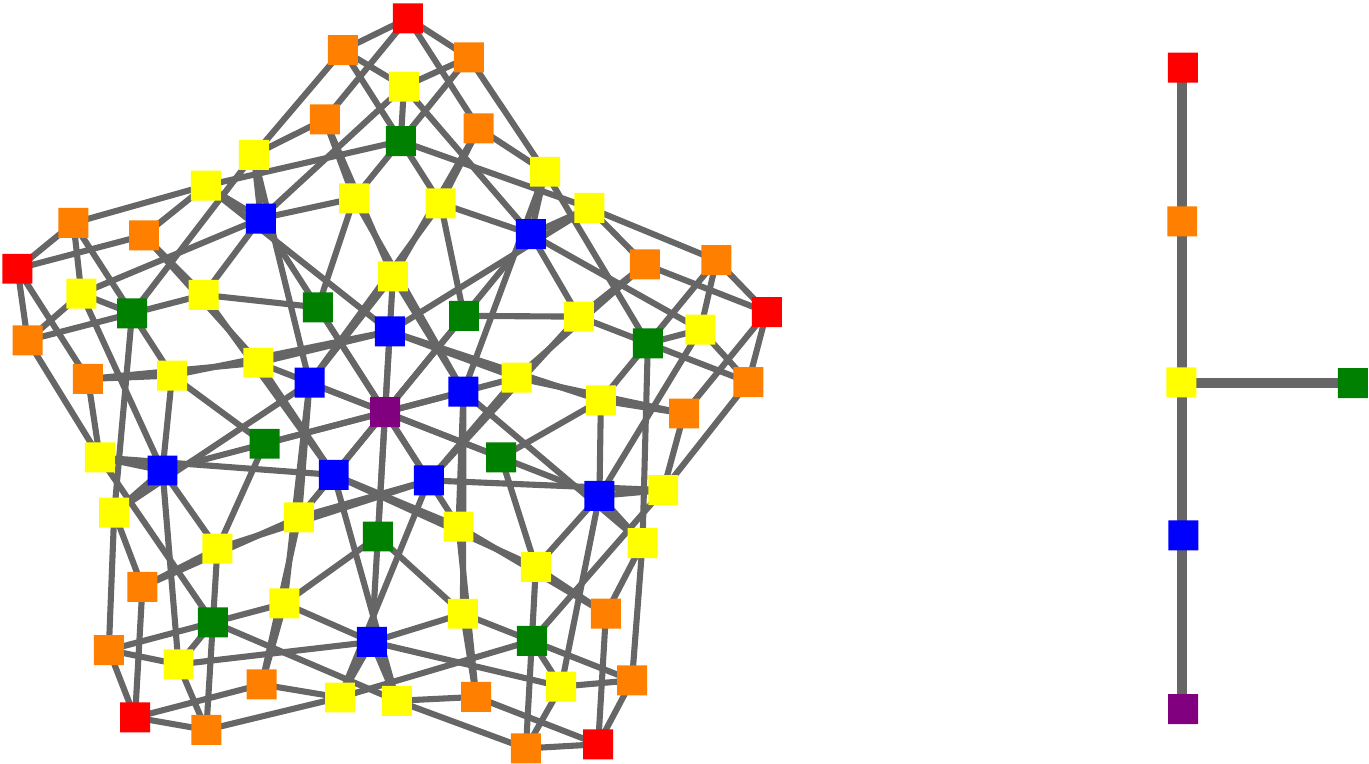}
\end{center}
 \caption{Adjacency graph of the maximal cones of the GIT-fan of $\mathbb{G}(2,5)$ and of their orbits under the $S_5$-action.}
 \label{fig G25}
 \end{figure}

\end{example}

\section{The Mori chamber decomposition of $\Mov(\overline{M}_{0,6})$\label{M06}}

In this section, we give a computational proof
of Theorem~\ref{thm:M06gitfan}, that is, we
determine the Mori chamber decomposition of the cone of movable divisor classes $\Mov(\overline{M}_{0,6})$ using Algorithm~\ref{algo:gitfanWithSymmetry}.
The input for the algorithm is the presentation
of the Cox ring of $\overline{M}_{0,6}$ in terms of generators
and relations determined by Bernal in~\cite[Theorem 5.4.1]{Ber}
together with the natural $S_6$-action thereon.
We summarize how to obtain this data in Construction~\ref{con:coxm06} and Algorithm~\ref{algo:coxm06}.

\begin{construction}[$S_6$-action on the polynomial ring, see \noexpand{\cite[Chapter 5.3]{Ber}}]
\label{con:coxm06}

Consider the effectively $\ZZ^{16}$-graded polynomial ring
$R$ with $40$ variables
\begin{align*}
R  &  :=\mathbb{K}[y_{1234},y_{1235},y_{1236},y_{1324},y_{1325},y_{1326}%
,y_{1423},y_{1425},y_{1426},y_{1523},y_{1524},y_{1526},y_{1623},\\
&  \phantom{= \mathbb{K}[}
\
y_{1624},y_{1625},
x_{12},x_{13},x_{14},x_{15},x_{16},x_{23}%
,x_{24},x_{25},x_{26},x_{34},x_{35},x_{36},x_{45},x_{46},x_{56},\\
&  \phantom{= \mathbb{K}
\
[}z_{123},z_{124},z_{125},z_{126},z_{134}%
,z_{135},z_{136},z_{145},z_{146},z_{156}]
\end{align*}
where the grading is given by providing the degrees of the generators $y_{abij}$, $x_{kl}$, $z_{mno}$ as columns of the integral $16\times 40$
matrix

\smallskip
\begin{center}
$Q :=$ \scalebox{0.7}{$\left[
\arraycolsep=1.4pt
\begin{array}
[c]{rrrrrrrrrrrrrrr}%
1 & 1 & 1 & 1 & 1 & 1 & 1 & 1 & 1 & 1 & 1 & 1 & 1 & 1 & 1\\
1 & 1 & 1 & 1 & 1 & 1 & 1 & 1 & 1 & 1 & 1 & 1 & 1 & 1 & 1\\
1 & 1 & 1 & 1 & 1 & 1 & 1 & 1 & 1 & 1 & 1 & 1 & 1 & 1 & 1\\
1 & 1 & 1 & 1 & 1 & 1 & 1 & 1 & 1 & 1 & 1 & 1 & 1 & 1 & 1\\
1 & 1 & 1 & 1 & 1 & 1 & 1 & 1 & 1 & 1 & 1 & 1 & 1 & 1 & 1\\
1 & 1 & 1 & 1 & 1 & 1 & 1 & 1 & 1 & 1 & 1 & 1 & 1 & 1 & 1\\
0 & 0 & 0 & 0 & 0 & 0 & 0 & -1 & -1 & 0 & -1 & -1 & 0 & -1 & -1\\
0 & 0 & 0 & 0 & -1 & -1 & 0 & 0 & 0 & -1 & 0 & -1 & -1 & 0 & -1\\
0 & 0 & 0 & -1 & 0 & -1 & -1 & 0 & -1 & 0 & 0 & 0 & -1 & -1 & 0\\
0 & 0 & 0 & -1 & -1 & 0 & -1 & -1 & 0 & -1 & -1 & 0 & 0 & 0 & 0\\
0 & -1 & -1 & 0 & 0 & 0 & 0 & 0 & 0 & -1 & -1 & 0 & -1 & -1 & 0\\
-1 & 0 & -1 & 0 & 0 & 0 & -1 & -1 & 0 & 0 & 0 & 0 & -1 & 0 & -1\\
-1 & -1 & 0 & 0 & 0 & 0 & -1 & 0 & -1 & -1 & 0 & -1 & 0 & 0 & 0\\
-1 & -1 & 0 & -1 & -1 & 0 & 0 & 0 & 0 & 0 & 0 & 0 & 0 & -1 & -1\\
-1 & 0 & -1 & -1 & 0 & -1 & 0 & 0 & 0 & 0 & -1 & -1 & 0 & 0 & 0\\
0 & -1 & -1 & 0 & -1 & -1 & 0 & -1 & -1 & 0 & 0 & 0 & 0 & 0 & 0
\end{array}
\begin{array}
[c]{|rrrrrrrrrrrrrrr}%
1 & 1 & 1 & 1 & 1 & 0 & 0 & 0 & 0 & 0 & 0 & 0 & 0 & 0 & 0\\
1 & 0 & 0 & 0 & 0 & 1 & 1 & 1 & 1 & 0 & 0 & 0 & 0 & 0 & 0\\
0 & 1 & 0 & 0 & 0 & 1 & 0 & 0 & 0 & 1 & 1 & 1 & 0 & 0 & 0\\
0 & 0 & 1 & 0 & 0 & 0 & 1 & 0 & 0 & 1 & 0 & 0 & 1 & 1 & 0\\
0 & 0 & 0 & 1 & 0 & 0 & 0 & 1 & 0 & 0 & 1 & 0 & 1 & 0 & 1\\
0 & 0 & 0 & 0 & 1 & 0 & 0 & 0 & 1 & 0 & 0 & 1 & 0 & 1 & 1\\
-1 & -1 & 0 & 0 & 0 & -1 & 0 & 0 & 0 & 0 & 0 & 0 & 0 & 0 & 0\\
-1 & 0 & -1 & 0 & 0 & 0 & -1 & 0 & 0 & 0 & 0 & 0 & 0 & 0 & 0\\
-1 & 0 & 0 & -1 & 0 & 0 & 0 & -1 & 0 & 0 & 0 & 0 & 0 & 0 & 0\\
-1 & 0 & 0 & 0 & -1 & 0 & 0 & 0 & -1 & 0 & 0 & 0 & 0 & 0 & 0\\
0 & -1 & -1 & 0 & 0 & 0 & 0 & 0 & 0 & -1 & 0 & 0 & 0 & 0 & 0\\
0 & -1 & 0 & -1 & 0 & 0 & 0 & 0 & 0 & 0 & -1 & 0 & 0 & 0 & 0\\
0 & -1 & 0 & 0 & -1 & 0 & 0 & 0 & 0 & 0 & 0 & -1 & 0 & 0 & 0\\
0 & 0 & -1 & -1 & 0 & 0 & 0 & 0 & 0 & 0 & 0 & 0 & -1 & 0 & 0\\
0 & 0 & -1 & 0 & -1 & 0 & 0 & 0 & 0 & 0 & 0 & 0 & 0 & -1 & 0\\
0 & 0 & 0 & -1 & -1 & 0 & 0 & 0 & 0 & 0 & 0 & 0 & 0 & 0 & -1
\end{array}
\begin{array}
[c]{|c}%
\\
\\
0_{6\times 10}\\
\\
\\
\\
\hline
\\
\\
\\
\\
\\
E_{10}\\
\\
\\
\\
\\
\end{array}
\right]$}
\end{center}
\smallskip
where we denote by $E_{10}$ the $10\times 10$ unit matrix
and by $0_{6\times 10}$ the $6\times 10$ zero matrix.
Moreover, consider the subgroup
$G \subseteq S_{40}$ isomorphic to $S_6$
generated by the permutations
\begin{center}
\scalebox{0.9}{\parbox{1\linewidth}{%
\begin{align*}
\sigma_{1}  &
=(4,7)(5,10)(6,13)(8,11)(9,14)(12,15)(17,21)(18,22)(19,23)(20,24)(35,40)(36,39)(37,38),\\
\sigma_{2}  &
=(1,4)(2,5)(3,6)(8,9)(11,12)(14,15)(16,17)(22,25)(23,26)(24,27)(32,35)(33,36)(34,37),\\
\sigma_{3}  &
=(2,3)(4,7)(5,8)(6,9)(10,11)(13,14)(17,18)(21,22)(26,28)(27,29)(31,32)(36,38)(37,39),\\
\sigma_{4}  &
=(1,2)(4,5)(7,10)(8,11)(9,12)(14,15)(18,19)(22,23)(25,26)(29,30)(32,33)(35,36)(39,40),\\
\sigma_{5}  &
=(2,3)(5,6)(8,9)(10,13)(11,14)(12,15)(19,20)(23,24)(26,27)(28,29)(33,34)(36,37)(38,39).
\end{align*}
}}
\end{center}
We then have an action of $G$ on $R$
$$
G\times R\,\to\,R,\qquad
\sigma_i\cdot T_j\, :=\, c_{\sigma_i,j}T_{\sigma(j)}
$$
where $T_j$ denotes the $j$-th variable of $R$
and the constants $c_{\sigma_i,j}$ are
the entries of  the following vectors $c_{\sigma_i}\in (\KK^*)^{40}$:
\[
\begin{tabular}
[c]{llll|l|ll}%
$c_{\sigma_{1}}$ & $=$ & $($ & $1^{7},-1^{2},1,-1,1^{2},-1,1,$ & $-1,1^{14},$
& $-1^{4},1^{6}$ & $)$,\\
$c_{\sigma_{2}}$ & $=$ & $($ & $1^{2},-1,1^{2},-1,1,-1^{2},1,-1^{2},1^{3},$ &
$1^{5},-1,1^{9},$ & $-1,1^{6},-1^{3}$ & $)$,\\
$c_{\sigma_{3}}$ & $=$ & $($ & $1^{3},-1^{6},1^{6},$ & $1^{9},-1,1^{5}$ &
$1^{2},-1^{3},1^{4},-1$ & $)$,\\
$c_{\sigma_{4}}$ & $=$ & $($ & $1^{13},-1^{2},$ & $1^{12},-1,1^{2},$ &
$-1,1^{2},-1,1^{2},-1^{2},1^{2}$ & $)$,\\
$c_{\sigma_{5}}$ & $=$ & $($ & $1,-1^{2},1^{8},-1,1^{2},-1,$ & $1^{14},-1,$ &
$-1^{2},1^{2},-1,1^{4},-1$ & $)$.
\end{tabular}
\]
Here, we write $a^b$ for the $b$-fold repetition of~$a$.

\end{construction}

From the data given by Construction~\ref{con:coxm06}, Algorithm \ref{algo:coxm06}
determines an explicit presentation
$R/I$ of the Cox ring $\Cox(\overline M_{0,6})$.

\begin{proposition}[Cox ring of $\overline M_{0,6}$]
\label{prop idealM06}
See \noexpand{\cite[Chapter 5.4]{Ber}}.
In the setting of Construction~\ref{con:coxm06},
the Cox ring of $\overline M_{0,6}$ is isomorphic to
$R/\aa$ where
$$
\aa\,:=\,
(I_1+G\cdot I_2):(y_{1234}\cdots z_{156})^{\infty},
$$
the $\ZZ^{16}$-degrees of the variables are the respective columns of the matrix~$Q$
and the ideals $I_1,I_2\subseteq R$ are defined as follows:
\begin{align*}
I_{1}\ :=\ \langle &x_{ij}x_{kl}z_{ijn}z_{kln}-x_{ik}x_{jl}z_{ikn}%
z_{jln}+x_{il}x_{jk}z_{iln}z_{jkn}\mid \,(i,j,k,l,m,n)\in\mathcal{M}\rangle,\\
 \mathcal{M}\ :=\ \{
 &\left(1,2,3,4,5,6\right),
\left(1,2,3,5,4,6\right),
\left(1,2,3,6,4,5\right),\\
 &\hspace{1cm}
 \left(1,2,4,5,3,6\right),
\left(1,2,4,6,3,5\right),
\left(1,2,5,6,3,4\right)\},\\
I_{2}\  \hphantom{:}=\ 
\langle &z_{126}y_{1423}-x_{13}x_{25}x_{46}z_{123}z_{134}%
z_{146}+x_{15}x_{24}x_{36}z_{124}z_{145}z_{156},\\
&  z_{126}y_{1425}+x_{13}x_{24}x_{56}z_{124}z_{134}z_{136}-x_{15}x_{23}%
x_{46}z_{125}z_{145}z_{146},\\
&  z_{125}y_{1426}-x_{13}x_{24}x_{56}z_{124}z_{134}z_{135}-x_{16}x_{23}%
x_{45}z_{126}z_{145}z_{146},\\
&  z_{126}y_{1523}-x_{13}x_{24}x_{56}z_{123}z_{135}z_{156}+x_{14}x_{25}%
x_{36}z_{125}z_{145}z_{146},\\
&  z_{126}y_{1524}+x_{13}x_{25}x_{46}z_{125}z_{135}z_{136}-x_{14}x_{23}%
x_{56}z_{124}z_{145}z_{156},\\
&  z_{124}y_{1526}-x_{13}x_{25}x_{46}z_{125}z_{134}z_{135}+x_{16}x_{23}%
x_{45}z_{126}z_{145}z_{156},\\
&  z_{125}y_{1623}+x_{13}x_{24}x_{56}z_{123}z_{136}z_{156}+x_{14}x_{26}%
x_{35}z_{126}z_{145}z_{146},\\
&  z_{125}y_{1624}+x_{13}x_{26}x_{45}z_{126}z_{135}z_{136}+x_{14}x_{23}%
x_{56}z_{124}z_{146}z_{156},\\
&  z_{135}y_{1625}+x_{12}x_{36}x_{45}z_{125}z_{126}z_{136}-x_{14}x_{23}%
x_{56}z_{134}z_{146}z_{156},\\
&  x_{12}y_{1234}+x_{13}x_{14}x_{25}x_{26}(z_{134})^{2}-x_{15}x_{16}%
x_{23}x_{24}(z_{156})^{2},\\
&  x_{12}y_{1235}+x_{13}x_{15}x_{24}x_{26}(z_{135})^{2}-x_{14}x_{16}%
x_{23}x_{25}(z_{146})^{2},\\
&  x_{12}y_{1236}-x_{13}x_{16}x_{24}x_{25}(z_{136})^{2}+x_{14}x_{15}%
x_{23}x_{26}(z_{145})^{2},\\
&  x_{13}y_{1324}+x_{12}x_{14}x_{35}x_{36}(z_{124})^{2}+x_{15}x_{16}%
x_{23}x_{34}(z_{156})^{2},\\
&  x_{13}y_{1325}+x_{12}x_{15}x_{34}x_{36}(z_{125})^{2}+x_{14}x_{16}%
x_{23}x_{35}(z_{146})^{2},\\
&  x_{13}y_{1326}+x_{12}x_{16}x_{34}x_{35}(z_{126})^{2}+x_{14}x_{15}%
x_{23}x_{36}(z_{145})^{2}\rangle.
\end{align*}
\end{proposition}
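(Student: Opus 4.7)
The proposition is essentially a recapitulation of Bernal's result \cite[Chapter 5.4]{Ber}, and I would organize the verification in three stages.

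First, starting from Castravet's theorem \cite{Ca}, the 40 elements named in Construction~\ref{con:coxm06} generate $\Cox(\overline M_{0,6})$ as a $\KK$-algebra, yielding a surjective $\ZZ^{16}$-graded $\KK$-algebra homomorphism $\pi \colon R \twoheadrightarrow \Cox(\overline M_{0,6})$ whose grading is encoded in the columns of $Q$. The natural $S_{6}$-action on $\overline M_{0,6}$ by permutation of marked points lifts to an action on these generators, up to the sign characters $c_{\sigma_i}$, making $\pi$ an equivariant map with respect to the $G$-action defined in Construction~\ref{con:coxm06}. Set $\bb := \ker \pi$; this is the ideal of relations that we wish to identify.

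Second, I would verify the inclusion $I_1 + G\cdot I_2 \subseteq \bb$. The trinomials in $I_1$ are the Pl\"ucker relations for the $\mathbb{G}(2,4)$-type boundary strata, one for each 4-subset of $\{1,\ldots,6\}$; they vanish in $\Cox(\overline M_{0,6})$ because the corresponding boundary substrata embed via a Pl\"ucker-type map. The generators of $I_2$ are the explicit Castravet--Bernal syzygies expressing the $y$-variables in terms of products of $x$- and $z$-variables, each verified by direct substitution in Bernal's construction. Equivariance of $\pi$ under the $c_{\sigma_i},\sigma_i$ then yields the full orbit $G\cdot I_2 \subseteq \bb$, whence $I_1 + G\cdot I_2 \subseteq \bb$.

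The crucial third step is the equality after saturation. Since $\overline M_{0,6}$ is smooth, projective, and a Mori dream space, its Cox ring is a unique factorization domain, so $\bb$ is prime; and since none of the 40 variable-images vanishes in $\Cox(\overline M_{0,6})$, the ideal $\bb$ is already saturated at $y_{1234}\cdots z_{156}$. It therefore suffices to compare the two ideals after inverting this product, which reduces via a dimension count to checking that $V(I_1 + G\cdot I_2)\cap D(y_{1234}\cdots z_{156})$ is irreducible of the expected dimension $\dim \overline M_{0,6} + \mathrm{rk}\,\mathrm{Cl}(\overline M_{0,6}) = 3 + 16 = 19$, which is carried out in \cite[Chapter 5.4]{Ber}. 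The main obstacle is computational rather than conceptual: a naive saturation of $I_1+G\cdot I_2$ in 40 variables against all 40 variables is infeasible with standard methods. Here Algorithm~\ref{algo:sat}, applied iteratively variable by variable with the negative reverse lexicographical tie-breaker of Remark~\ref{rem:monomialOrderings} and parallelized across variables, is indispensable; its division step in Line~\ref{line add element} lowers the degree of intermediate generators and leads to the earlier stabilization of leading ideals needed for this computation to terminate.
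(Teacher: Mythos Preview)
The paper does not supply its own proof of this proposition: it is stated with the attribution ``See \cite[Chapter~5.4]{Ber}'' and no proof environment follows. The only addition the paper makes is the subsequent Remark~\ref{algo:coxm06}, which records Bernal's computational trick of saturating $I_1$ and $I_2$ separately before saturating their sum. So there is nothing to compare your argument against line by line; your three-stage outline (Castravet's surjection, containment of $I_1+G\cdot I_2$ in the kernel, and equality after saturation via primality of the kernel plus an irreducibility/dimension check on the open torus piece) is a reasonable reconstruction of the standard Cox-ring argument and is consistent with what one would expect Bernal's proof to contain.

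One point of divergence worth flagging: in your third step you invoke Algorithm~\ref{algo:sat} as the device that makes the saturation feasible. That is anachronistic for this proposition, since Bernal's thesis predates the present paper; the paper's own commentary in Remark~\ref{algo:coxm06} instead credits Bernal with the two-step strategy of saturating $I_1$ and $I_2$ individually and only then saturating $I_1+G\cdot I_2$. Algorithm~\ref{algo:sat} is introduced in this paper for the $\mathfrak{a}$-face tests in the GIT-fan computation (cf.\ Table~\ref{tab timings}, whose examples do come from the ideal $\mathfrak{a}$ of Proposition~\ref{prop idealM06}), not for establishing Proposition~\ref{prop idealM06} itself. Your mathematical skeleton is fine, but if you want to match the paper's account you should replace the appeal to Algorithm~\ref{algo:sat} with Bernal's separate-saturation trick.
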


\begin{remark}
\label{algo:coxm06}
 In order to make the computation of generators for the ideal $\aa\subseteq R$ in Proposition~\ref{prop idealM06} feasible, Bernal~\cite{Ber} proposes to compute the saturation in two steps: 
 saturate the ideals $I_{1}$ and $I_2$ separately before 
 saturating their sum. 
\end{remark}

We can now directly use the results from the previous sections
to compute the Mori chamber decomposition of
$\overline M_{0,6}$.
To simplify the computation, we restrict to cones lying within the {\em moving cone}~$\operatorname{Mov}(\overline M_{0,6})$, i.e., the $16$-dimensional polyhedral cone
$$
\operatorname{Mov}(\overline M_{0,6})
\ =\
\bigcap_{i=1}^{40}
\operatorname{cone}(q_j\mid j\ne i)
\ \subseteq\
\operatorname{Eff}(\overline M_{0,6})
\ \subseteq\
\QQ^{16}
$$
where the $q_i\in\ZZ^{16}$ are the columns of the degree matrix $Q$ from Construction~\ref{con:coxm06}
and the cone $\operatorname{Eff}(\overline M_{0,6})$ of effective divisor classes equals
$\operatorname{cone}(q_1,\ldots,q_r)$.
The cone $\operatorname{Mov}(\overline M_{0,6})$ has $110$ facets and $128\,745$ rays.
It contains the cone $\operatorname{SAmple}(\overline M_{0,6})$ of semiample divisor classes.

\begin{remark}
 \label{rem:justific}
Recall from~\cite[Section~3.4]{ArDeHaLa} that the moving cone encodes the interesting part of Mori chamber decomposition in the following sense:
let $D$ be an effective divisor and let $\lambda$ be the GIT-cone of the Mori chamber decomposition  with $[D]\in\lambda^\circ$. Setting $X:=\overline M_{0,6}$, we obtain a birational map
$$
\varphi_D\colon X \to X(D)
\ :=\
\operatorname{Proj}\left(
\Gamma\left(X,\mathcal{A}(D)\right)\right)
,\qquad
\mathcal{A}(D)
\ :=\
\bigoplus_{n\in \ZZ_{\geq 0}}\mathcal{O}_X(nD).
$$
Then the map $\varphi_D$ is a {\em small quasimodification}, i.e., an isomorphism between open subsets of codimension at least two, if and only if $[D]\in \operatorname{Mov}(\overline M_{0,6})^\circ$,
a morphism if and only if $[D]\in\operatorname{SAmple}(\overline M_{0,6})$ and an isomorphism if and only if
$[D]\in \operatorname{Ample}(\overline M_{0,6})$.
\end{remark}

We are in the process
of investigating the feasibility of the computation of the full Mori chamber decomposition.

\begin{proof}[Computational proof of Theorem~\ref{thm:M06gitfan}]
This is an application of Algorithm~\ref{algo:gitfanWithSymmetry}:
as input we use the ideal of relations $\aa\subseteq\KK[y,x,z]$ of the Cox ring of $\overline M_{0,6}$ as given in Proposition~\ref{prop idealM06}
together with the corresponding grading matrix $Q$
as well as the symmetry group $G$ from Construction~\ref{con:coxm06}.
To restrict our computation to the cone of movable divisor classes $\sigma :=\operatorname{Mov}(\overline M_{0,6})$,
we change Algorithm~\ref{algo:gitfanWithSymmetry} slightly by redefining the notion of an {\em interior facet} to stand for facets $\eta\seite \lambda$ of GIT-cones $\lambda$ that meet $\sigma^\circ$ non-trivially.
This yields the Mori chamber decomposition of~$\sigma$.

A distinct set of
representatives of the maximal cones and the group action can be found in~\cite{gitfandata}. The numerical properties stated in the
theorem can easily be derived from this data by the corresponding functions provided in \textsc{gitfan.lib}.
\end{proof}

We immediately retrieve the following statement on the cone of semiample divisor classes; compare also~\cite[Section~6]{GiMac}.

\begin{corollary}
 The Mori cone of  $\overline M_{0,6}$ is the polyhedral cone in $\QQ^{16}$  generated by the $65$ rays in Table \ref{tab mori}. The semiample cone of  $\overline M_{0,6}$ (which is the dual of the Mori cone) has exactly $65$ facets and $3190$ rays.
   \end{corollary}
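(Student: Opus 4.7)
The plan is to identify the semiample cone $\operatorname{SAmple}(\overline M_{0,6})$ as one specific maximal cone in the Mori chamber decomposition computed in Theorem~\ref{thm:M06gitfan}, and then to read off its facet and ray data. By Remark~\ref{rem:justific}, a class $[D]\in\operatorname{Mov}(\overline M_{0,6})$ lies in the interior of $\operatorname{SAmple}(\overline M_{0,6})$ if and only if the associated rational map $\varphi_D$ is a morphism, i.e., $X(D)\cong \overline M_{0,6}$. Hence $\operatorname{SAmple}(\overline M_{0,6})$ is precisely the unique maximal GIT-cone of the Mori chamber decomposition whose relative interior contains the class of an ample divisor on~$\overline M_{0,6}$.

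First I would take a known ample test class on $\overline M_{0,6}$, for instance a suitable positive combination of boundary divisors (or the anticanonical class), express it in the $\ZZ^{16}$-grading from Construction~\ref{con:coxm06}, and locate the unique maximal cone $\lambda_{\mathrm{sa}}\in\Lambda(\aa,Q)$ from the output data at~\cite{gitfandata} containing this class in its relative interior. Since the algorithm maintains cones in terms of supporting hyperplanes rather than rays, the facets of $\lambda_{\mathrm{sa}}$ are available directly: I count them and record their inner normals. These normals are by definition the ray generators of the dual cone $\lambda_{\mathrm{sa}}^\vee$, which is the Mori cone by standard duality between the semiample and Mori cones. This yields the $65$ rays tabulated in Table~\ref{tab mori}.

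Second, to obtain the rays of $\operatorname{SAmple}(\overline M_{0,6})=\lambda_{\mathrm{sa}}$ itself, I would apply a standard double-description computation to convert the $65$ half-space inequalities into vertices/rays, using an exact-arithmetic polyhedral package such as \textsc{polymake}. This step is entirely mechanical once the facet description is fixed and produces the $3190$ rays claimed.

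The main obstacle is practical rather than theoretical: one must correctly pick out the single semiample cone out of the $176\,512\,180$ maximal cones of the fan, which requires a genuinely ample test class and exact containment checks against the half-space descriptions stored in~\cite{gitfandata}. Once this cone is identified, both the facet count and the ray enumeration in dimension~$16$ are routine in exact arithmetic, so no further large-scale computation beyond that of Theorem~\ref{thm:M06gitfan} is needed.
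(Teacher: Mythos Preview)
Your argument is correct, but the paper identifies the semiample cone by a much shorter symmetry argument rather than by locating an explicit ample test class. Since the $S_6$-action on $\Cox(\overline M_{0,6})$ comes from the natural $S_6$-action on $\overline M_{0,6}$ by permuting marked points, the semiample cone is $G$-invariant, i.e.\ it forms an $S_6$-orbit of length~$1$. Theorem~\ref{thm:M06gitfan} records that among the $249\,604$ orbits of maximal cones there is exactly one orbit of cardinality~$1$; hence that cone is automatically the semiample cone, with no need to produce an ample class or to test containment against any of the stored half-space descriptions. Your approach has the advantage of being entirely self-contained (it does not appeal to the equivariance of the semiample cone), but it trades this for the practical task you yourself flag as the ``main obstacle'': singling out one cone among all the orbit representatives via point location. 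Once the cone is identified, the two proofs converge: reading off the $65$ facet normals gives the rays of the Mori cone, and a double-description pass yields the $3190$ rays of the semiample cone.
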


\begin{proof}By definition, the semiample cone is contained in the moving cone.
 By Theorem~\ref{thm:M06gitfan}, there is exactly one orbit of GIT-cones  of length one. Its unique element is, hence, the semiample cone.
\end{proof}

\begin{table}
\begin{scriptsize}
\begin{tabular}{c|c}
\setlength{\tabcolsep}{2.5pt}

\begin{tabular}{rrrrrrrrrrrrrrrr}
-2& 1&  0&  0&  1& -1& 1&  0&  0&  0&  0&  1&  1&  0&  0&  0\\
-2& 1&  0&  1&  0& -1& 1&  0&  0&  0&  1&  0&  1&  0&  0&  0\\
-2& 1&  1&  0&  0& -1& 1&  0&  0&  0&  1&  1&  0&  0&  0&  0\\
-2& 1&  1&  1&  1& -1& 0&  0&  0&  0&  0&  0&  0&  0&  0&  0\\
-1& 0&  0&  0&  0& -1& 1&  0&  0&  1&  0&  1&  0&  0&  0&  0\\
-1& 0&  0&  0&  0& -1& 1&  0&  0&  1&  1&  0&  0&  0&  0&  0\\
-1& 0&  0&  0&  0& -1& 1&  0&  1&  0&  0&  0&  1&  0&  0&  0\\
-1& 0&  0&  0&  0& -1& 1&  0&  1&  0&  1&  0&  0&  0&  0&  0\\
-1& 0&  0&  0&  0& -1& 1&  1&  0&  0&  0&  0&  1&  0&  0&  0\\
-1& 0&  0&  0&  0& -1& 1&  1&  0&  0&  0&  1&  0&  0&  0&  0\\
-1& 0&  0&  0&  1& -1& 1&  0&  1&  1&  0&  0&  0&  0&  0&  0\\
-1& 0&  0&  0&  1&  0& 0&  0&  0&  0&  0&  0&  0&  0&  0& -1\\
-1& 0&  0&  1&  0& -1& 1&  1&  0&  1&  0&  0&  0&  0&  0&  0\\
-1& 0&  0&  1&  0&  0& 0&  0&  0&  0&  0&  0&  0&  0& -1&  0\\
-1& 0&  1&  0&  0& -1& 1&  1&  1&  0&  0&  0&  0&  0&  0&  0\\
-1& 0&  1&  0&  0&  0& 0&  0&  0&  0&  0&  0&  0& -1&  0&  0\\
-1& 1&  0&  0&  0& -1& 1&  0&  0&  0&  0&  0&  1&  0&  0&  1\\
-1& 1&  0&  0&  0& -1& 1&  0&  0&  0&  0&  0&  1&  0&  1&  0\\
-1& 1&  0&  0&  0& -1& 1&  0&  0&  0&  0&  1&  0&  0&  0&  1\\
-1& 1&  0&  0&  0& -1& 1&  0&  0&  0&  0&  1&  0&  1&  0&  0\\
-1& 1&  0&  0&  0& -1& 1&  0&  0&  0&  1&  0&  0&  0&  1&  0\\
-1& 1&  0&  0&  0& -1& 1&  0&  0&  0&  1&  0&  0&  1&  0&  0\\
-1& 1&  0&  0&  1& -1& 1&  0&  0&  0&  0&  0&  0&  1&  1&  0\\
-1& 1&  0&  0&  1&  0& 0& -1&  0&  0&  0&  0&  0&  0&  0&  0\\
-1& 1&  0&  1&  0& -1& 1&  0&  0&  0&  0&  0&  0&  1&  0&  1\\
-1& 1&  0&  1&  0&  0& 0&  0& -1&  0&  0&  0&  0&  0&  0&  0\\
-1& 1&  1&  0&  0& -1& 1&  0&  0&  0&  0&  0&  0&  0&  1&  1\\
-1& 1&  1&  0&  0&  0& 0&  0&  0& -1&  0&  0&  0&  0&  0&  0\\
 0& 0& -1&  0&  0& -1& 1&  0&  0&  0&  0&  0&  1&  1&  0&  0\\
 0& 0& -1&  0&  0& -1& 1&  0&  0&  1&  0&  0&  0&  1&  0&  0\\
 0& 0& -1&  0&  0& -1& 1&  0&  0&  1&  0&  0&  1&  0&  0&  0\\
 0& 0& -1&  0&  0&  0& 1&  0&  0&  0&  0&  0&  0&  1&  0&  0\\
 0& 0& -1&  0&  0&  0& 1&  0&  0&  0&  0&  0&  1&  0&  0&  0\\
\end{tabular}

&
\setlength{\tabcolsep}{2.5pt}

\begin{tabular}{rrrrrrrrrrrrrrrr}
 0& 0& -1&  0&  0&  0& 1&  0&  0&  1&  0&  0&  0&  0&  0&  0\\
 0& 0&  0& -1&  0& -1& 1&  0&  0&  0&  0&  1&  0&  0&  1&  0\\
 0& 0&  0& -1&  0& -1& 1&  0&  1&  0&  0&  0&  0&  0&  1&  0\\
 0& 0&  0& -1&  0& -1& 1&  0&  1&  0&  0&  1&  0&  0&  0&  0\\
 0& 0&  0& -1&  0&  0& 1&  0&  0&  0&  0&  0&  0&  0&  1&  0\\
 0& 0&  0& -1&  0&  0& 1&  0&  0&  0&  0&  1&  0&  0&  0&  0\\
 0& 0&  0& -1&  0&  0& 1&  0&  1&  0&  0&  0&  0&  0&  0&  0\\
 0& 0&  0&  0& -1& -1& 1&  0&  0&  0&  1&  0&  0&  0&  0&  1\\
 0& 0&  0&  0& -1& -1& 1&  1&  0&  0&  0&  0&  0&  0&  0&  1\\
 0& 0&  0&  0& -1& -1& 1&  1&  0&  0&  1&  0&  0&  0&  0&  0\\
 0& 0&  0&  0& -1&  0& 1&  0&  0&  0&  0&  0&  0&  0&  0&  1\\
 0& 0&  0&  0& -1&  0& 1&  0&  0&  0&  1&  0&  0&  0&  0&  0\\
 0& 0&  0&  0& -1&  0& 1&  1&  0&  0&  0&  0&  0&  0&  0&  0\\
 0& 0&  0&  0&  0& -1& 0&  0&  0&  0&  0&  0&  0&  0&  0&  0\\
 0& 0&  0&  0&  0& -1& 1&  0&  0&  1&  0&  0&  0&  0&  0&  1\\
 0& 0&  0&  0&  0& -1& 1&  0&  0&  1&  0&  0&  0&  0&  1&  0\\
 0& 0&  0&  0&  0& -1& 1&  0&  1&  0&  0&  0&  0&  0&  0&  1\\
 0& 0&  0&  0&  0& -1& 1&  0&  1&  0&  0&  0&  0&  1&  0&  0\\
 0& 0&  0&  0&  0& -1& 1&  1&  0&  0&  0&  0&  0&  0&  1&  0\\
 0& 0&  0&  0&  0& -1& 1&  1&  0&  0&  0&  0&  0&  1&  0&  0\\
 0& 0&  0&  0&  0&  0& 0& -1&  0&  0&  0&  0&  0&  0&  0&  0\\
 0& 0&  0&  0&  0&  0& 0&  0& -1&  0&  0&  0&  0&  0&  0&  0\\
 0& 0&  0&  0&  0&  0& 0&  0&  0& -1&  0&  0&  0&  0&  0&  0\\
 0& 0&  0&  0&  0&  0& 0&  0&  0&  0& -1&  0&  0&  0&  0&  0\\
 0& 0&  0&  0&  0&  0& 0&  0&  0&  0&  0& -1&  0&  0&  0&  0\\
 0& 0&  0&  0&  0&  0& 0&  0&  0&  0&  0&  0& -1&  0&  0&  0\\
 0& 0&  0&  0&  0&  0& 0&  0&  0&  0&  0&  0&  0& -1&  0&  0\\
 0& 0&  0&  0&  0&  0& 0&  0&  0&  0&  0&  0&  0&  0& -1&  0\\
 0& 0&  0&  0&  0&  0& 0&  0&  0&  0&  0&  0&  0&  0&  0& -1\\
 0& 0&  0&  0&  1&  0& 0&  0&  0&  0& -1&  0&  0&  0&  0&  0\\
 0& 0&  0&  1&  0&  0& 0&  0&  0&  0&  0& -1&  0&  0&  0&  0\\
 0& 0&  1&  0&  0&  0& 0&  0&  0&  0&  0&  0& -1&  0&  0&  0\\
 \phantom{0}
\end{tabular}
\end{tabular}
\end{scriptsize}
\vspace{2mm}
\caption{Extremal rays of the Mori cone of $\overline M_{0,6}$, specified as rows.}
\label{tab mori}
\end{table}

\begin{remark}\label{4545}
The set of minimal orbit cones of dimension $16$ intersected with the moving cone is the union of two distinct
orbits consisting of $45$ elements each.
\end{remark}
\begin{remark}
As suggested by Diane Maclagan, one may expect that the restriction of the GIT-fan to $\operatorname{Mov}(\overline M_{0,6})$ can also be obtained by restricting to the subring \begin{align*}
 &  \mathbb{K}[
x_{12},x_{13},x_{14},x_{15},x_{16},x_{23}%
,x_{24},x_{25},x_{26},x_{34},x_{35},x_{36},x_{45},x_{46},x_{56},\\
&  \phantom{\mathbb{K}
\
[}z_{123},z_{124},z_{125},z_{126},z_{134}%
,z_{135},z_{136},z_{145},z_{146},z_{156}]
\end{align*}
of $R$, i.e., by 
 eliminating the variables corresponding to the Keel-Vermeire divisors from the ideal $\mathfrak{a}$ (constructed in Proposition~\ref{prop idealM06}). The corresponding computation shows that the set of minimal orbit cones of dimension $16$ intersected with the moving cone is the union of three distinct
orbits, two of length $45$, which agree with those mentioned in Remark~\ref{4545}, and one of length $15$.  Each cone in the orbit of length $15$ is the intersection of three cones in one of the orbits of length $45$, hence, the resulting Mori chamber decomposition of $\operatorname{Mov}(\overline M_{0,6})$ agrees with that in Theorem \ref{thm:M06gitfan}.
\end{remark}

\begin{remark}      
The 
computations for the proof of Theorem~\ref{thm:M06gitfan}
took approximately $8$ days, about one week for obtaining the $\mathfrak{a}$-faces (with a parallel computation on 16 cores) and one day for deriving the GIT-cones (by a parallel fan traversal on $16$ cores).
Making use of the group action of $G$, representing GIT-cones via the hash function of Construction~\ref{con:hash},
and applying Algorithm \ref{algo:sat} for the monomial containment tests turned out to be crucial for finishing
the computation.
\end{remark}

\bibliographystyle{abbrv}
\bibliography{gitfan}

\end{document}